\documentclass{amsart}

\usepackage{latexsym,amssymb}
\usepackage{amsmath,amsthm}

\usepackage{amsfonts}
\usepackage{graphicx}   
\usepackage{url}

%\usepackage[notref,notcite]{showkeys}

%\usepackage[pdftex,a4paper,plainpages=false,  bookmarksnumbered,colorlinks, linkcolor=blue, citecolor=red, pagebackref, bookmarks, breaklinks]{hyperref}
%\usepackage{hyperref}
%\usepackage{nicefrac}

%%%%%PAQUETE QUE NO NUMERA LAS ECUACIONES QUE NO SE CITAN
\usepackage{mathtools}
\mathtoolsset{showonlyrefs}

\def\dist{\mathop{\text{\normalfont dist}}}

\def\pint{\operatorname {--\!\!\!\!\!\int\!\!\!\!\!--}}

\newcommand{\R}{\mathbb{R}}

\newcommand{\N}{\mathbb{N}}

\newcommand{\ve}{\varepsilon}
\newcommand{\ito}{\infty}

\newcommand{\D}{\mathcal{D}}
\newcommand{\J}{\mathcal{J}}

\newtheorem{thm}{Theorem}[section]
\newtheorem{corol}[thm]{Corollary}
\newtheorem{lemp}[thm]{Lemma}
\newtheorem{prop}[thm]{Proposition}
	
\theoremstyle{definition}
\newtheorem{de}[thm]{Definition}
\theoremstyle{remark}
\newtheorem{rk}[thm]{Remark}

\parskip 5pt

\numberwithin{equation}{section}

\begin{document}

\title[Optimal design problems]{Optimal design problems for the first $p-$fractional eigenvalue 
with mixed boundary conditions}

\author[J. Fern\'andez Bonder, J.D. Rossi and J.F. Spedaletti]{Juli\'an Fern\'andez Bonder, Julio D. Rossi and Juan F. Spedaletti}

\address[J. Fern\'andez Bonder]{Departamento de Matem\'atica FCEN - Universidad de Buenos Aires and IMAS - CONICET. Ciudad Universitaria, Pabell\'on I (C1428EGA)
Av. Cantilo 2160. Buenos Aires, Argentina.}

\email{jfbonder@dm.uba.ar}

\urladdr{http://mate.dm.uba.ar/~jfbonder}

\address[J.D. Rossi]{Departamento de Matem\'atica FCEN - Universidad de Buenos Aires and IMAS - CONICET. Ciudad Universitaria, Pabell\'on I (C1428EGA)
Av. Cantilo 2160. Buenos Aires, Argentina.}

\email{jrossi@dm.uba.ar}

\urladdr{http://mate.dm.uba.ar/~jrossi}

\address[J.F. Spedaletti]{Departamento de Matem\'atica, Universidad Nacional de San Luis and IMASL - CONICET. Ej\'ercito de los Andes 950 (D5700HHW), San Luis, Argentina.}

\email{jfspedaletti@unsl.edu.ar}

%35J92  	Quasilinear elliptic equations with $p$-Laplacian
%49R05  	Variational methods for eigenvalues of operators
%47A75  	Eigenvalue problems
%47J10  	Nonlinear spectral theory, nonlinear eigenvalue problems
%35P30  	Nonlinear eigenvalue problems, nonlinear spectral theory
\subjclass[2010]{35P30, 35J92, 49R05}

\keywords{Shape optimization, Fractional laplacian, Gamma convergence}

\begin{abstract}
In this paper we study an optimal shape design problem for the first eigenvalue of the fractional $p-$laplacian with mixed boundary conditions. The optimization variable is the set where the Dirichlet condition is imposed (that is restricted to have measure equal than a prescribed quantity, $\alpha$). We show existence of an optimal design and analyze the asymptotic behavior when the fractional parameter $s\uparrow 1$ obtaining asymptotic bounds that are independent of $\alpha$.
\end{abstract}

\maketitle

%%%%%%%%%%%%%%%%%%%%%%%%%
%%
%% INTRODUCTION
%%
%%%%%%%%%%%%%%%%%%%%%%%%%
\section{Introduction}
The purpose of this paper is to analyze some optimization problems related to the first eigenvalue of the fractional $p-$laplacian with mixed boundary conditions of Neumann and Dirichlet type, where the optimization variable is the region in which the Dirichlet condition is imposed.

Let us be more specific.  Let $\Omega \subset \R^n$ be an open, bounded and Lipschitz domain. 
The complement of $\Omega$ is then divided into two sets
$$
\R^n\setminus \Omega = N\cup D, \quad N\cap D=\emptyset,
$$
and , for $0<s<1<p<\ito$,
we consider the fractional $p-$laplacian operator of order $s$ with homogeneous Neumann condition on $N$ and homogeneous Dirichlet condition on $D$. That is
$$
(-\Delta_p)^s u(x) = 2K\ \text{p.v.} \int_{\Omega\cup D} \frac{|u(x)-u(y)|^{p-2}(u(x)-u(y))}{|x-y|^{n+sp}}\, dy,
$$
where $K=K(n,s,p)$ is a normalization constant given by
\begin{equation}\label{defK}
K:= (1-s) \left(\pint_{\partial B_1} |e_1\cdot y|^p\, dS_y\right)^{-1} = (1-s) \frac{\sqrt{\pi}\Gamma(\frac{n+p}{2})}{\Gamma(\frac{n}{2})\Gamma(\frac{p+1}{2})},
\end{equation}
here $\Gamma(z) = \int_0^\infty t^{z-1}e^{-t}\, dt$ is the Gamma function.

This operator is thought as acting in the fractional Sobolev space
$$
W^{s,p}_D(\Omega) := \{u\in W^{s,p}(\Omega\cup D)\colon u|_D = 0 \text{ a.e.}\}
$$
where, as usual, the fractional Sobolev space $W^{s,p}(U)$ is defined as 
$$
W^{s,p}(U)=\left \{u\in L^{p}(U)\colon \frac{|u(x)-u(y)|}{|x-y|^{\frac{n}{p}+s}}\in L^{p}(U\times U)\right \}.
$$
Hence, $(-\Delta_p)^s\colon W^{s,p}_D(\Omega)\to [W^{s,p}_D(\Omega)]'$ and
$$
\langle (-\Delta_p)^s u, v\rangle = K \iint_{(\Omega\cup D)\times (\Omega\cup D)} \frac{|u(x)-u(y)|^{p-2}(u(x)-u(y))(v(x)-v(y))}{|x-y|^{n+sp}} dxdy.
$$

With these definitions, the associated eigenvalue problem is
\begin{equation}\label{eigenvalue}
\begin{cases}
(-\Delta_p)^s u = \lambda |u|^{p-2} u & \text{in }\Omega\\
u = 0 & \text{in } D.
\end{cases}
\end{equation}

The set $N$ does not enter in our formulation and is interpreted as a Neumann condition over it, since the system does not interact with $N$.

This problem is analogous in the nonlocal fractional setting to the eigenvalue problem for the
local $p-$Laplacian
$$
\begin{cases}
-\Delta_p u = - \mbox{div} (|\nabla u|^{p-2} \nabla u) = \lambda |u|^{p-2} u & \text{in }\Omega\\
u = 0 & \text{on } \partial \Omega \setminus \Gamma \\
|\nabla u|^{p-2} \partial_\nu u = 0 & \text{on } \Gamma .
\end{cases}
$$
Here $\Gamma\subset \partial \Omega$. When $\Gamma= \emptyset$ we have the usual Dirichlet problem
(with a positive first eigenvalue, $\lambda_1>0$); 
while for $\Gamma=\partial \Omega$ we have the Neumann problem (and here the first eigenvalue is zero, $\lambda_1^N=0$).

Recall that the fractional Sobolev space $W^{s,p}(U)$ is a Banach space when one considers the norm
\begin{equation}\label{normaSobolevclasico}
\|u\|_{W^{s,p}(U)}=\left(\int_U |u|^p\,dx+\iint_{U\times U} \frac{|u(x)-u(y)|^p}{|x-y|^{n+sp}}\,dx \,dy \right)^{1/p}.
\end{equation}
Moreover, for $0<s<1<p<\infty$ it is a reflexive, uniformly convex and separable Banach space.
The term
\begin{equation}\label{seminormaSobolev}
[u]_{W^{s,p}(U)}^p = [u]_{s,p; U}^p= \iint_{U\times U} \frac{|u(x)-u(y)|^p}{|x-y|^{n+sp}}\,dx \,dy,
\end{equation}
is called the {\em Gagliardo seminorm} of $u$. We refer the interested reader to \cite{DiNezza-Palatucci-Valdinoci} for a throughout introduction to these spaces and operators.

It is straightforward to see that $\lambda\in\R$ is an eigenvalue of \eqref{eigenvalue} if and only if is a critical value of the functional
$$
\J(v) := K \iint_{(\Omega\cup D)\times (\Omega\cup D)} \frac{|v(x)-v(y)|^p}{|x-y|^{n+sp}}\, dxdy = K [v]_{s,p; \Omega\cup D}^p,
$$
restricted to the unit ball of $L^p(\Omega)$. Moreover, eigenfunctions of \eqref{eigenvalue} associated to $\lambda$ are critical points of $\J$ on the unit ball of $L^p(\Omega)$ with critical value $\lambda$.

Of particular importance is the {\em first eigenvalue} of \eqref{eigenvalue}, that is given by
$$
\lambda_s(D) = \inf_{v\in W^{s,p}_D(\Omega)} \frac{\J(v)}{ \|v\|_{p;\Omega}^p}.
$$

It is an easy consequence of the direct method of the Calculus of Variations (c.f. Section 2) that
the following assertions hold:
\begin{itemize}
\item the above infimum is achieved, and we can assume that a function $u\in W^{s,p}_D(\Omega)$ that realize the infimum (we will called such a function an {\em extremal}) is normalized in $L^p(\Omega)$, that is, $\|u\|_{p;\Omega}=1$;

\item the number $\lambda_s(D)$ is in fact an eigenvalue of \eqref{eigenvalue};

\item $\lambda_s(D)$ is the first (smallest) eigenvalue, i.e. if $\lambda\in\R$ is an eigenvalue of \eqref{eigenvalue}, then $\lambda\ge \lambda_s(D)$;

\item any eigenfunction associated to $\lambda_s(D)$ has constant sign;

\item $\lambda_s(D)>0$ if $|D|>0$.
\end{itemize} 

The main problem that we address here is the optimization of the {\em principal eigenvalue} $\lambda_s(D)$ with respect to the region $D$ where the Dirichlet data is imposed. With that in mind, we fix a constant $\alpha>0$, define the class
$$
\D_\alpha := \{D\subset \R^n\setminus \Omega\colon \text{measurable and } |D|=\alpha\}
$$
and consider the optimization problems
\begin{equation}\label{lambda.s.+-}
\Lambda_s^+(\alpha) := \sup_{D\in \D_\alpha} \lambda_s(D)\quad \text{and} \quad \Lambda_s^-(\alpha) := \inf_{D\in \D_\alpha} \lambda_s(D).
\end{equation}

As we will see, $\Lambda_s^-(\alpha) = 0$ since pushing $D$ to infinity force that in the limit, so in order to recover a nontrivial constant for the  minimization problem one has to restrict the sets $D$ to be uniformly bounded. Therefore, given $R>0$ large, we define
$$
\D_\alpha^R := \{D\in \D_\alpha\colon D\subset B_R(0)\},
$$
and
\begin{equation}\label{lambda.s.-R}
\Lambda_s^{-,R}(\alpha) := \inf_{D\in \D_\alpha^R} \lambda_s(D).
\end{equation}
This value $\Lambda_s^{-,R}(\alpha)$ is strictly positive.

The main results in this paper are contained in the following theorem.
\begin{thm}\label{teo.main}
Let $\Omega\subset \R^n$ be bounded and open. 
Take $0<s<1<p<\infty$ and fix $\alpha>0$. Let $\Lambda_s^+(\alpha)$, $\Lambda_s^-(\alpha)$, $\Lambda_s^{-,R}(\alpha)$ be given by \eqref{lambda.s.+-} and \eqref{lambda.s.-R}. Then the following hold:
\begin{itemize}
\item $\Lambda_s^-(\alpha)=0$ while $\Lambda_s^{-,R}(\alpha)>0$;

\item $\lim_{s\uparrow 1}\Lambda_s^+(\alpha) = \lambda_1$, where $\lambda_1$ is the first Dirichlet eigenvalue of the local $p-$laplacian in $\Omega$;

\item $\lim_{s\uparrow 1} \Lambda_s^{-,R}(\alpha)= 0$.
\end{itemize}

Moreover, for every $0<s<1$ there exists an optimal set $D_s^R$ for the constant $\Lambda_s^{-,R}(\alpha)$.

Finally, given a sequence of quasi-optimal sets for $\Lambda_s^+(\alpha)$, i.e. $D_s\subset \D_\alpha$ such that
$$
\Lambda_s^+(\alpha) = \lambda_s(D_s) + o(1),
$$
where $o(1)\to 0$ as $s\uparrow 1$, we have that $D_s$ ``surrounds'' the boundary of $\Omega$ in the sense that for any $x\in \partial\Omega$ and any $\ve>0$, there exists $s_0$ such that if $s_0<s<1$, $|D_s\cap B_\ve(x)|>0$.
\end{thm}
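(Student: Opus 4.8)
The plan is to prove the theorem in several stages, handling each bullet and the two final assertions in turn.

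\textbf{Stage 1: The sign of $\Lambda_s^-$ and positivity of $\Lambda_s^{-,R}$.} For $\Lambda_s^-(\alpha)=0$ I would fix a test function $v\in C_c^\infty(\Omega)$ with $\|v\|_{p;\Omega}=1$ and, for a sequence of points $x_k\to\infty$, take $D_k := B_{\rho}(x_k)$ with $\rho$ chosen so that $|D_k|=\alpha$. Since $v$ is supported in $\Omega$, it belongs to $W^{s,p}_{D_k}(\Omega)$ and $\mathcal{J}(v) = K[v]_{s,p;\Omega\cup D_k}^p = K\bigl([v]_{s,p;\Omega}^p + 2\iint_{\Omega\times D_k}|v(x)|^p|x-y|^{-n-sp}\,dx\,dy\bigr)$; the cross term is $O(\mathrm{dist}(\Omega,D_k)^{-n-sp})\to0$, so $\lambda_s(D_k)\to 0$, giving $\Lambda_s^-(\alpha)=0$. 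For $\Lambda_s^{-,R}(\alpha)>0$, I would use a compactness argument: if it were zero, there would be sets $D_k\in\D_\alpha^R$ and extremals $u_k$ with $\lambda_s(D_k)\to 0$; the $\chi_{D_k}$ converge weakly-$*$ in $L^\infty$ to some $g$ with $\int g =\alpha$ and $0\le g\le 1$ supported in $\overline{B_R}$, and a Poincaré-type inequality on $W^{s,p}_D(\Omega)$ that is uniform over all $D$ with $|D\cap B_R|$ bounded below — proved by contradiction exactly as in the classical mixed-boundary-condition setting — would force $\lambda_s(D_k)$ to stay bounded away from zero, a contradiction.

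\textbf{Stage 2: Existence of an optimal set $D_s^R$.} This is a standard direct-method argument in shape optimization: take a minimizing sequence $D_k\in\D_\alpha^R$ with associated normalized extremals $u_k\in W^{s,p}_{D_k}(\Omega)\subset W^{s,p}(\Omega\cup B_R)$. By the uniform Poincaré inequality from Stage 1, $\{u_k\}$ is bounded in $W^{s,p}(\Omega\cup B_R)$, hence (up to subsequence) converges weakly there and strongly in $L^p$ to some $u$; meanwhile $\chi_{D_k}\to g$ weakly-$*$. The key lemma is that the weak-$L^\infty$ limit of $\chi_{D_k}$ is again a characteristic function of a set $D_s^R\in\D_\alpha^R$ \emph{once we know} $u_k\to0$ a.e. on $D_k$ forces $u=0$ on $\{g>0\}$; since the problem only ``sees'' the Dirichlet constraint through the set where $u$ vanishes, we may replace $g$ by $\chi_{D_s^R}$ for any set of measure $\alpha$ containing $\{g>0\}$ inside $B_R$. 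Lower semicontinuity of the Gagliardo seminorm under weak convergence then gives $\lambda_s(D_s^R)\le\liminf\lambda_s(D_k)=\Lambda_s^{-,R}(\alpha)$, hence equality.

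\textbf{Stage 3: The $s\uparrow1$ limits.} For $\lim_{s\uparrow1}\Lambda_s^+(\alpha)=\lambda_1$ I would combine a $\Gamma$-convergence / Bourgain–Brezis–Mironescu type result (the normalization constant $K$ in \eqref{defK} is precisely chosen so that $K[u]_{s,p;\R^n}^p\to\|\nabla u\|_p^p$ as $s\uparrow1$) with the observation that $\lambda_s(D)\le\lambda_s(\R^n\setminus\Omega)$ for every $D$, and that $\lambda_s(\R^n\setminus\Omega)$ is the first eigenvalue with full Dirichlet exterior data, which converges to $\lambda_1$; the matching lower bound comes from picking near-optimal $D_s$ and using compactness of the extremals in $L^p$ together with lower semicontinuity in the limit, noting the exterior contribution of any bounded or unbounded $D$ can only increase the energy above the local Dirichlet energy on $\Omega$. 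For $\lim_{s\uparrow1}\Lambda_s^{-,R}(\alpha)=0$, I would exhibit an explicit competitor: spread $D$ as a thin far-away-from-$\Omega$ slab inside $B_R$ — more precisely take $D$ to be a shell near $\partial B_R$ — and a test function equal to $1$ on $\Omega$ cut off to vanish on $D$; as $s\uparrow1$ the fractional interaction localizes, so the seminorm of a function that is locally constant except across a region of fixed positive distance from $\Omega$ tends to zero after multiplying by $K\sim(1-s)$. Actually a cleaner route: bound $\Lambda_s^{-,R}(\alpha)\le\lambda_s(D)$ for one fixed $D$ at positive distance from $\Omega$ and show $\lambda_s(D)\to0$, using that $K[u]_{s,p;\Omega\cup D}^p\to\|\nabla u\|_{p;\Omega}^p$ for any fixed admissible $u\in W^{1,p}$ vanishing on $D$, then minimize the \emph{local} problem over such $u$ — and since $D$ is at positive distance, one can take $u\equiv1$ on $\Omega$ in the limiting local functional, whose gradient energy can be made arbitrarily small.

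\textbf{Stage 4: The concentration statement for quasi-optimal $D_s$.} This is the most delicate part and I expect it to be the main obstacle. Suppose, for contradiction, that there is $x_0\in\partial\Omega$ and $\ve_0>0$ and a subsequence $s\uparrow1$ with $|D_s\cap B_{\ve_0}(x_0)|=0$. Let $u_s$ be the normalized extremal for $\lambda_s(D_s)$. Since $\lambda_s(D_s)\to\lambda_1$ by the second bullet, the family $u_s$ is bounded in a suitable sense and, by the BBM-type compactness (Ponce's compactness theorem), $u_s\to u$ strongly in $L^p(\Omega)$ along a further subsequence with $u\in W^{1,p}_0(\Omega)$ (because $u_s$ vanishes on $D_s$ which surrounds most of $\partial\Omega$) and $u$ a first Dirichlet eigenfunction, so $u>0$ in $\Omega$. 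The point is to derive a contradiction from the ``hole'' in the Dirichlet region near $x_0$: because $D_s$ omits $B_{\ve_0}(x_0)$, one can build a competitor $\tilde u_s$ for $\lambda_s(D_s)$ that is a small perturbation of $u_s$ but ``leaks'' mass across $\partial\Omega$ into $B_{\ve_0}(x_0)\setminus\Omega$ — concretely, extend $u_s$ by a nontrivial reflection or a bump into that exterior ball — thereby strictly lowering the Rayleigh quotient by a definite amount in the limit, contradicting quasi-optimality of $D_s$ for the \emph{supremum} $\Lambda_s^+(\alpha)$. Making ``definite amount'' precise requires a careful quantitative estimate showing that the first eigenvalue with a Dirichlet region having a fixed-size hole near the boundary is bounded away from $\lambda_1$ as $s\uparrow1$; this in turn rests on a strict-monotonicity / unique-continuation property of the limiting local eigenfunction $u$ near $\partial\Omega$, namely that $u$ has a nonvanishing outward derivative (Hopf lemma) at $x_0$, which is what prevents the exterior contribution from being negligible. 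I would formalize this via the $\Gamma$-convergence of the family of functionals $v\mapsto K[v]_{s,p;\Omega\cup D_s}^p$ (with varying $D_s$) and a careful analysis of the limit Dirichlet region.
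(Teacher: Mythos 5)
Your overall architecture (explicit test sets/functions, BBM--$\Gamma$-convergence for the $s\uparrow1$ limits, contradiction for the surrounding property) matches the paper's, but several key steps as written would fail. First, in Stage 1 your choice $v\in C^\infty_c(\Omega)$ does not prove $\Lambda_s^-(\alpha)=0$: for fixed $s$ the quantity $K[v]_{s,p;\Omega}^p$ is a fixed \emph{positive} constant, so sending $D_k$ to infinity only kills the cross term and yields $\limsup_k\lambda_s(D_k)\le K[v]_{s,p;\Omega}^p>0$. The paper instead tests with the \emph{constant} function $|\Omega|^{-1/p}$ on $\Omega$ extended by zero to the far-away ball $D_k$ (admissible because $\dist(\Omega,D_k)>0$); its Gagliardo seminorm over $\Omega\times\Omega$ vanishes identically, so only the decaying cross term remains. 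A closely related gap occurs in your lower bound for $\lim_s\Lambda_s^+(\alpha)$: the assertion that ``the exterior contribution of any $D$ can only increase the energy above the local Dirichlet energy'' is not a proof that the BBM limit of the extremals lies in $W^{1,p}_0(\Omega)$ — for a far-away $D$ the liminf is $0$, not $\lambda_1$. One must choose a specific competitor, namely the collar $D=\Omega_r\setminus\overline\Omega$ with $|D|=\alpha$, so that the extremals vanish on a full neighborhood of $\partial\Omega$ and their BBM limit is forced into $W^{1,p}_0(\Omega)$, giving $\liminf_s\lambda_s(D)\ge\lambda_1$.

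Second, Stage 2 is broken at the point where you claim the weak-$*$ limit $g$ of $\chi_{D_k}$ can be replaced by ``$\chi_{D}$ for any set of measure $\alpha$ containing $\{g>0\}$'': $g$ is generically not a characteristic function and $|\{g>0\}|$ can greatly exceed $\alpha$ (oscillating $D_k$ give $g\equiv\tfrac12$ on a set of measure $2\alpha$), so no such $D$ exists; moreover the energy does see $D$ through the cross term $\int_\Omega\int_D|u(x)|^p|x-y|^{-(n+sp)}\,dy\,dx$, not merely through where $u$ vanishes. The missing ingredient is the \emph{bathtub principle}: one minimizes $\int_{B_R\setminus\Omega}\phi(y)f(y)\,dy$ with $f(y)=\int_\Omega|u(x)|^p|x-y|^{-(n+sp)}dx$ over densities $0\le\phi\le1$ of mass $\alpha$, obtains a characteristic-function minimizer $\chi_{D_u}$ (a sublevel set of $f$), and then closes the argument with Fatou plus weak lower semicontinuity of the seminorm. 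Finally, for Stage 4 your contradiction scheme is right but the construction is not carried out and is much harder than necessary: no Hopf lemma or quantitative perturbation of $u_s$ is needed. One simply tests $\lambda_{s_k}(D_k)$ with the first eigenfunction $u_1$ of the \emph{local} mixed problem with Neumann window $\Gamma=\partial\Omega\cap B_{\ve_0}(x)$ (Dirichlet on $\partial\Omega\setminus\Gamma$), extended by zero; since $|D_k\cap B_{\ve_0}(x)|=0$ this is admissible, $K[u_1]^p_{s_k,p}\to\|\nabla u_1\|_p^p=\lambda_1(\Gamma)$, and the strict inequality $\lambda_1(\Gamma)<\lambda_1$ contradicts $\liminf_k\lambda_{s_k}(D_k)\ge\lambda_1$.
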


Notice that the first Neumann eigenvalue for the $p-$Laplacian in $\Omega$ is $\lambda_1^N =0$ (with
a constant function as eigenfunction). Therefore, as we have that for every set $D$ of measure $\alpha$
in $B_R \setminus \Omega$
$$
\Lambda_s^+(\alpha) \geq \lambda_s (D) \geq \Lambda_s^{-,R}(\alpha),
$$
we conclude that for $s$ close to $1$ we have that $\lambda_s (D)$ 
asymptotically lies between 
the first Dirichlet eigenvalue and the first Neumann eigenvalue for the local $p-$Laplacian in $\Omega$.
In fact, for every $\varepsilon $ there exists $s_0$ such that for $s_0 <s<1$ it holds
$$
 \lambda_1 + \varepsilon \geq \lambda_s (D) \geq  \lambda_1^N =0.
$$
The surprising fact of these asymptotic bounds as $s\uparrow 1$ is that they are independent of the size of the Dirichlet part in our nonlocal problem and they are also independent of the radius of the ball that bounds everything.

To emphasize that in the limit as $s\uparrow 1$ for the lower bound for the eigenvalues we obtain a local problem with Neumann boundary conditions in $\Omega$ we remark that with minor modifications of our arguments we can deal with the eigenvalue problem with a potential $V$. In fact, let us consider $V\in L^\infty(\R^n)$ a potential such that 
\begin{equation}\label{cota.V}
0<v_1\leq V (x) \leq v_2 <+\infty
\end{equation} 
for some numbers $v_1,v_2$ and then we take
$$
\lambda_{s,V}(D) = \inf_{v\in W^{s,p}_D(\Omega)} \frac{\J_V(v)}{ \|v\|_{p;\Omega}^p},
$$
with
$$
\J_V(v) := K \iint_{(\Omega\cup D)\times (\Omega\cup D)} \frac{|v(x)-v(y)|^p}{|x-y|^{n+sp}}\, dxdy 
+ \int_\Omega |u(x)|^p V(x)\, dx .
$$
Associated with this functional we have the optimal constants
\begin{equation}\label{LambdaV}
\Lambda_{s,V}^+(\alpha) := \sup_{D\in \D_\alpha} \lambda_{s,V}(D),\qquad \Lambda_{s,V}^-(\alpha) := \inf_{D\in \D_\alpha} \lambda_{s,V}(D)
\end{equation}
and
\begin{equation}\label{LambdaVR}
\Lambda_{s,V}^{-,R}(\alpha) := \inf_{D\in \D_\alpha^R} \lambda_{s,V}(D).
\end{equation}
In this case, we obtain that
$$
\Lambda_{s,V}^+(\alpha) \to \lambda_1(V)\qquad \mbox{and} \qquad
\Lambda_{s,V}^{-,R}(\alpha) \to \lambda_1^N(V),
$$
as $s\uparrow 1$. Here $\lambda_1 (V)$ and $\lambda_1^N (V)$ are the first eigenvalues for the local $p-$Laplacian with the potential $V$ with Dirichlet and Neumann boundary condition respectively, that are given by
\begin{equation}\label{localVD}
\lambda_1 (V) = \min_{v\in W^{1,p}_0 (\Omega)}  \frac{ \displaystyle
  \int_\Omega |\nabla v|^p\, dx + \int_\Omega |v|^p V(x)\, dx }{\|v\|_{p;\Omega}^p}
\end{equation}
and 
\begin{equation}\label{localVN}
\lambda_1^N (V) = \min_{v\in W^{1,p} (\Omega)}  \frac{ \displaystyle
\int_\Omega |\nabla v|^p\, dx + \int_\Omega |v|^p V(x)\, dx}{\|v\|_{p;\Omega}^p}.
\end{equation}
Notice that in this case $\lambda_1^N (V) \neq 0$.

When we consider a potential $V$ one can also check that  if we don't constraint $D$ into a large ball we still have
$$
\lim_{s\uparrow 1}\Lambda_{s,V}^{-}(\alpha) = \lambda_1^N(V).
$$
In fact, this limit can be deduced taking the limit as $s\uparrow 1$ in the following two
inequalities
$$
\Lambda_{s,V}^{-}(\alpha) \leq \Lambda_{s,V}^{-,R}(\alpha) 
$$
and
$$
\Lambda_{s,V}^{-}(\alpha) \geq \min_{v\in W^{s,p} (\Omega)}
\frac{ \displaystyle  K \iint_{\Omega\times \Omega} \frac{|v(x)-v(y)|^p}{|x-y|^{n+sp}}\, dxdy  + \int_\Omega |v(x)|^p V(x)\, dx}{ \|v\|_{p;\Omega}^p}.
$$
We include some details in Section 5.

With these preliminaries, our second result is the following:
\begin{thm}\label{teo.main2}
Let $\Omega\subset \R^n$ be bounded and open. Let $0<s<1<p<\infty$ be fixed and take $\alpha>0$. Let $V\in L^\infty(\R^n)$ a potential such that \eqref{cota.V} is satisfied.

Let $\Lambda_{s,V}^+(\alpha)$, $\Lambda_{s,V}^-(\alpha)$, $\Lambda_{s,V}^{-,R}(\alpha)$ be the constants defined in \eqref{LambdaV} and \eqref{LambdaVR}. Then the following hold
\begin{itemize}
\item $\lim_{s\uparrow 1}\Lambda_{s,V}^+(\alpha) = \lambda_1(V)$, where $\lambda_1(V)$ is given by \eqref{localVD};

\item $\lim_{s\uparrow 1}\Lambda_{s,V}^{-,R}(\alpha) = \lambda_1^N(V)$, where $\lambda_1^N(V)$ is given by \eqref{localVN}.

\item $\lim_{s\uparrow 1}\Lambda_{s,V}^{-}(\alpha) = \lambda_1^N(V)$.
\end{itemize}
\end{thm}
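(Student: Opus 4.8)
The plan is to mimic, step by step, the proof of Theorem~\ref{teo.main}, the only new feature being the lower order term $\int_\Omega |v(x)|^p V(x)\,dx$, which by \eqref{cota.V} is comparable to $\|v\|_{p;\Omega}^p$ and, more importantly, is continuous with respect to strong $L^p(\Omega)$ convergence (because $V\in L^\infty(\R^n)$). This is exactly why the Dirichlet limit $\lambda_1$ of Theorem~\ref{teo.main} is upgraded to $\lambda_1(V)$ from \eqref{localVD}, and why the Neumann limit (which was $\lambda_1^N=0$) becomes $\lambda_1^N(V)$ from \eqref{localVN}. The analytic ingredients are the same Bourgain--Brezis--Mironescu type facts used in Section~2: on a Lipschitz set $U$ one has $K[v]_{s,p;U}^p\to\int_U|\nabla v|^p$ as $s\uparrow 1$ for $v\in W^{1,p}(U)$ (this being the reason for the normalization \eqref{defK}), together with the companion compactness/semicontinuity statement: if $s_k\uparrow 1$, $\{u_k\}$ is bounded in $L^p(U)$ and $\sup_k (1-s_k)[u_k]_{s_k,p;U}^p<\infty$, then $\{u_k\}$ is precompact in $L^p(U)$ and every limit point $u$ belongs to $W^{1,p}(U)$ with $\int_U|\nabla u|^p\le\liminf_k K[u_k]_{s_k,p;U}^p$.

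For the limit $\Lambda_{s,V}^+(\alpha)\to\lambda_1(V)$ I would argue as follows. For the upper bound, take $v\in W^{1,p}_0(\Omega)$ with $\|v\|_{p;\Omega}=1$ and let $\tilde v$ be its extension by $0$; then $\tilde v\in W^{s,p}_D(\Omega)$ for every admissible $D$, so $\lambda_{s,V}(D)\le K[\tilde v]_{s,p;\Omega\cup D}^p+\int_\Omega|v|^pV\le K[\tilde v]_{s,p;\R^n}^p+\int_\Omega|v|^pV$, a bound independent of $D$; hence $\Lambda_{s,V}^+(\alpha)$ is bounded by the same quantity, and letting $s\uparrow 1$ (so that $K[\tilde v]_{s,p;\R^n}^p\to\int_\Omega|\nabla v|^p$) and then minimizing over $v$ gives $\limsup_{s\uparrow1}\Lambda_{s,V}^+(\alpha)\le\lambda_1(V)$. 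For the matching lower bound, fix once and for all a set $D_0\in\D_\alpha$ that contains a thin exterior collar $C_0$ of $\partial\Omega$ with $\Omega\cup C_0$ Lipschitz (adjusting the total measure to $\alpha$ with a remote filler if necessary). Let $u_s$ be an extremal for $\lambda_{s,V}(D_0)$, normalized by $\|u_s\|_{p;\Omega}=1$. Since $K[u_s]_{s,p;\Omega\cup C_0}^p\le K[u_s]_{s,p;\Omega\cup D_0}^p\le\J_V(u_s)=\lambda_{s,V}(D_0)\le\Lambda_{s,V}^+(\alpha)$ stays bounded by the previous step, the compactness statement applied on $\Omega\cup C_0$ gives, along a subsequence, $u_s\to u$ in $L^p(\Omega\cup C_0)$ with $u\in W^{1,p}(\Omega\cup C_0)$; since $u=0$ on $C_0$ the trace of $u|_\Omega$ on $\partial\Omega$ vanishes, so $u|_\Omega\in W^{1,p}_0(\Omega)$ and $\|u\|_{p;\Omega}=1$. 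Lower semicontinuity of the Gagliardo energy, $L^p$ continuity of the potential term and \eqref{localVD} then yield $\liminf_{s\uparrow1}\lambda_{s,V}(D_0)\ge\int_\Omega|\nabla u|^p+\int_\Omega|u|^pV\ge\lambda_1(V)$, and since $\Lambda_{s,V}^+(\alpha)\ge\lambda_{s,V}(D_0)$ this closes the first limit.

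For the constants $\Lambda_{s,V}^{-,R}(\alpha)$ and $\Lambda_{s,V}^-(\alpha)$ I would first record $\Lambda_{s,V}^-(\alpha)\le\Lambda_{s,V}^{-,R}(\alpha)$, since $\D_\alpha^R\subset\D_\alpha$. For the upper bound on $\Lambda_{s,V}^{-,R}(\alpha)$, fixing (for $R$ large) a set $D_0\subset B_R(0)\setminus\Omega$ with $|D_0|=\alpha$ and $d_0:=\dist(D_0,\Omega)>0$, I would test $\lambda_{s,V}(D_0)$ with the function equal to a $\lambda_1^N(V)$--extremal $v\in W^{1,p}(\Omega)$ (normalized in $L^p(\Omega)$) on $\Omega$ and to $0$ on $D_0$: the interaction term $K\iint_{\Omega\times D_0}|x-y|^{-n-sp}|v(x)|^p\,dx\,dy$ is bounded by $C(n,p)\,K\,d_0^{-sp}\to 0$ (because $K\sim(1-s)$), while $K[v]_{s,p;\Omega}^p\to\int_\Omega|\nabla v|^p$, so that $\limsup_{s\uparrow1}\Lambda_{s,V}^{-,R}(\alpha)\le\lambda_1^N(V)$. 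For the lower bound, observe (as indicated in the Introduction) that for every $D$ and every $w\in W^{s,p}_D(\Omega)$ one has $[w]_{s,p;\Omega\cup D}^p\ge[w]_{s,p;\Omega}^p$, hence $\lambda_{s,V}(D)\ge\lambda_{s,V}^N(\Omega):=\min_{w\in W^{s,p}(\Omega)}(K[w]_{s,p;\Omega}^p+\int_\Omega|w|^pV)/\|w\|_{p;\Omega}^p$ for all $D$, so $\Lambda_{s,V}^-(\alpha)\ge\lambda_{s,V}^N(\Omega)$; and $\lambda_{s,V}^N(\Omega)\to\lambda_1^N(V)$ as $s\uparrow1$ by the very same $\Gamma$--convergence and compactness on $\Omega$ (now with no boundary constraint). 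Combining everything, $\lambda_1^N(V)\le\liminf_{s\uparrow1}\Lambda_{s,V}^-(\alpha)\le\liminf_{s\uparrow1}\Lambda_{s,V}^{-,R}(\alpha)\le\limsup_{s\uparrow1}\Lambda_{s,V}^{-,R}(\alpha)\le\lambda_1^N(V)$, which proves simultaneously the second and third limits.

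The step I expect to be the main obstacle is the lower bound for $\Lambda_{s,V}^+(\alpha)$: one has to design the test set $D_0$ so that the enlarged domain $\Omega\cup C_0$ is regular enough for the Bourgain--Brezis--Mironescu compactness to apply, so that the vanishing of $u_s$ on $C_0$ passes to the $L^p$--limit and forces zero trace on $\partial\Omega$ (hence the limit lies in $W^{1,p}_0(\Omega)$ and feeds into \eqref{localVD}), all the while respecting the rigid constraint $|D_0|=\alpha$ --- which is precisely why $D_0$ is taken as a thin Lipschitz collar plus, if needed, a piece far away that does not interfere. Once the $\Gamma$--convergence and equicoercivity results recalled in Section~2 are available, the remaining estimates (in particular the decay $K\,d_0^{-sp}\to 0$ of the interaction terms) are routine, and the passage of the potential term to the limit is immediate from $V\in L^\infty(\R^n)$ and the strong $L^p(\Omega)$ convergence.
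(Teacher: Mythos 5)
Your proof is correct and follows essentially the same route as the paper: a Bourgain--Brezis--Mironescu sandwich for each limit, with the potential term passing by strong $L^p$-continuity, and for the minimization the two-sided bound $\lambda_{s,V}(\emptyset)\le\Lambda_{s,V}^-(\alpha)\le\Lambda_{s,V}^{-,R}(\alpha)$ closed by testing a local Neumann extremal extended by zero to a $D_0$ at positive distance, so that the cross term decays like $1-s$. The only small deviation is in $\limsup_{s\uparrow 1}\Lambda_{s,V}^+(\alpha)\le\lambda_1(V)$, where the paper reduces via the monotonicity lemma to $\lambda_{s,V}(\R^n\setminus\Omega)$ and cites Brasco--Parini--Squassina, while you test directly with $v\in W^{1,p}_0(\Omega)$ extended by zero and the uniform bound $K[\tilde v]_{s,p;\Omega\cup D}^p\le K[\tilde v]_{s,p;\R^n}^p$; both are valid, yours being slightly more self-contained for the version with a potential. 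Your use of a Lipschitz collar $C_0$ plus a remote filler for the $\liminf$ of $\Lambda_{s,V}^+(\alpha)$ mirrors the paper's fattening $D=\Omega_r\setminus\overline\Omega$, and the regularity caveat you flag (so that BBM compactness and the trace argument on $\Omega\cup C_0$ apply) is a point the paper also leaves implicit.
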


A very brief comment on related bibliography is in order. Optimal configurations related to eigenvalue problems
is by now a classical subject, 
just to mention a few references we quote \cite{Bucur2, Bucur1, Ch, henrot1, S1, S2}.
On the other hand, nonlocal problems are quite popular nowadays, we just refer to \cite{DiNezza-Palatucci-Valdinoci}
and for references concerning eigenvalues for the nonlocal $p-$Laplacian to \cite{Brasco-Parini-Squassina, Bonder-Spedaletti} and references therein.

\subsection*{Organization of the paper} After this introduction, the rest of the paper is organized as follows: 
In Section 2 we revise some preliminary notions on fractional Sobolev spaces that are needed in the paper. In Section 3 we study the maximization problem and in Section 4 the minimization problem. Finally, in Section 5, we prove Theorem \ref{teo.main2}. Since the proof is similar to the one of Theorem \ref{teo.main} we only sketch it and stress the differences.

%%%%%%%%%%%%%%%%%%%%%%%%%
%%
%% PRELIMINARIES
%%
%%%%%%%%%%%%%%%%%%%%%%%%%
\section{Preliminaries}
In this section, we review some definitions on fractional Sobolev spaces and on the $p-$fractional Laplace operator. We believe that most of these results are known to experts and constitute part of the ``folklore'' on the subject, 
but we have chosen to include some proofs of the facts that are needed just for the reader's convenience.

\subsection{A probabilistic interpretation for the mixed boundary conditions}

Let $\Omega\subset \R^n$ and take $N, D\subset \R^n\setminus\Omega$ such that $N\cap D=\emptyset$ and $N\cup D=\R^n\setminus \Omega$.

Suppose that $u=u(x,t)$ measures the density of some substance in space 
$x\in\R^n$ and time $t>0$. Assume that the probability that a particle jumps from a position $x$ to a different position $y$ by time unit is given by a kernel $k = k(x-y)$. Assume moreover that the kernel is symmetric, i.e. $k(z)=k(-z)$.

Now, we further assume that the particles inside $\Omega$ do not interact with the particles in $N$ and inside $D$ the density is zero (every particle that jumps into $D$ is automatically killed). Then, the conservation law for the mass gives rise to the equation
\begin{equation}\label{conservation}
\begin{split}
u_t(x,t) &= \int_\Omega k(x-y) u(y,t)\, dy - \int_{\Omega\cup D} k(x-y) u(x,t)\, dy\\
&= -\int_{\Omega\cup D} k(x-y) (u(x,t)-u(y,t))\, dy.
\end{split}
\end{equation}

In this paper we consider the case where the kernel $k$ behaves like a power of the distance, that is
$$
k(z) \sim |z|^{-(n+2s)},
$$
which means that the particle tends with high probability to stay close to the original position but can jump with positive probability to positions far away.
This type of kernel give rise to the so-called fractional Laplace operator defined as in the introduction,
$$
(-\Delta)^s v(x) = 2 K(n,s,2)\, \text{p.v.}\int_U \frac{v(x)-v(y)}{|x-y|^{n+2s}}\, dy,
$$
where p.v. stands for {\em principal value}.

In some models, the kernel $k$ depends not only on the distance but also on the difference in concentration, making more likely to jump when the difference in concentration is large. i.e.
$$
k = k(x-y, v(x)-v(y)).
$$
Here we consider the case where
$$
k \sim \frac{|v(x)-v(y)|^{p-2}}{|x-y|^{n+sp}},
$$
and these type of kernels give rise to the so-called fractional $p-$laplace operator defined as
\begin{equation}\label{eq.plap}
(-\Delta_p)^s v(x) = 2K(n,s,p) \, \text{p.v.} \int_U \frac{|v(x)-v(y)|^{p-2}(v(x)-v(y))}{|x-y|^{n+sp}}\, dy.
\end{equation}

One key factor to analyze the behavior of the solutions of \eqref{conservation} is the first eigenvalue of the associated operator, i.e. the smallest value of $\lambda\in \R$ such that there exists a nontrivial solution to
$$
\begin{cases}
(-\Delta_p)^s v = \lambda |v|^{p-2} v & \text{in }\Omega\\
v = 0 & \text{in } D
\end{cases}
$$
so the purpose of this paper is to analyze this problem and in particular the optimization of this first eigenvalue with respect to the region $D$ and the asymptotic behavior of this optimal eigenvalue when the fractional parameter $s$ tends to 1.

%%%%%%%%%%%%%%%%%%%%
%% REGIONAL (p,s)-LAPLACIAN
%%%%%%%%%%%%%%%%%%%%
\subsection{The fractional $p-$laplacian}
In this subsection, we recall some basic facts about the fractional $p-$laplacian given in \eqref{eq.plap}.

\begin{lemp}[\cite{Bonder-Spedaletti}, Lemma 2.2]\label{plap.distribucional}
Let $0<s<1<p<\infty$ be fixed and let $U\subset\R^n$ be open. For every $u\in W^{s,p}(U)$, the fractional $p-$laplacian given by \eqref{eq.plap} defines a distribution $\mathcal D'(U)$. Moreover,
$$
\langle (-\Delta_p)^s u, \phi\rangle = K \iint_{U\times U} \frac{|u(x)-u(y)|^{p-2} (u(x)-u(y))(\phi(x)-\phi(y))}{|x-y|^{n+sp}}\, dx\, dy,
$$
for every $\phi\in C^\infty_c(\Omega)$, where $K$ is defined in \eqref{defK}.
\end{lemp}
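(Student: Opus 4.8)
The plan is to prove the two assertions simultaneously by establishing one quantitative estimate on the symmetric double integral and then identifying that integral with the natural action of $(-\Delta_p)^s u$.

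Fix $\phi\in C^\infty_c(U)$, extend it by zero so that it becomes globally Lipschitz on $\R^n$ (i.e. $|\phi(x)-\phi(y)|\le\|\nabla\phi\|_\infty|x-y|$ for all $x,y$), and let $L:=\sop\phi$, a compact subset of $U$. Since the integrand of
$$
I(u,\phi):=\iint_{U\times U}\frac{|u(x)-u(y)|^{p-2}(u(x)-u(y))(\phi(x)-\phi(y))}{|x-y|^{n+sp}}\,dx\,dy
$$
vanishes when both $x\notin L$ and $y\notin L$, and is symmetric under $x\leftrightarrow y$, it suffices to estimate $|I(u,\phi)|$ by twice the integral of the absolute value over $L\times U$, which I split according to whether $|x-y|\ge 1$ or $|x-y|<1$. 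On the far part $|x-y|\ge 1$ one bounds $|\phi(x)-\phi(y)|\le 2\|\phi\|_\infty$ and $|u(x)-u(y)|^{p-1}\le 2^{p-1}(|u(x)|^{p-1}+|u(y)|^{p-1})$; since $|x-y|^{-(n+sp)}$ and its $p$-th power are integrable on $\{|x-y|\ge 1\}$ (because $sp>0$), Hölder's inequality with exponents $p/(p-1)$ and $p$ produces a bound $C(n,s,p,L)\,\|\phi\|_\infty\,\|u\|_{p;U}^{p-1}$. On the near part $|x-y|<1$ one uses the Lipschitz bound on $\phi$ and the splitting, with $p'=p/(p-1)$,
$$
\frac{|u(x)-u(y)|^{p-1}}{|x-y|^{n+sp-1}}=\left(\frac{|u(x)-u(y)|^p}{|x-y|^{n+sp}}\right)^{1/p'}\cdot\frac{1}{|x-y|^{(n+sp)/p-1}},
$$
so that Hölder with exponents $p'$ and $p$ gives
$$
\iint_{L\times U,\ |x-y|<1}\frac{|u(x)-u(y)|^{p-1}}{|x-y|^{n+sp-1}}\,dx\,dy\le [u]_{s,p;U}^{p-1}\left(\iint_{L\times U,\ |x-y|<1}\frac{dx\,dy}{|x-y|^{n+sp-p}}\right)^{1/p},
$$
and the last integral is finite precisely because $s<1$ forces $n+sp-p<n$, while $L$ is bounded. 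Collecting the two parts,
$$
|I(u,\phi)|\le C(n,s,p,L)\left(\|u\|_{p;U}^{p-1}+[u]_{s,p;U}^{p-1}\right)\big(\|\phi\|_\infty+\|\nabla\phi\|_\infty\big).
$$

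This estimate yields both claims. For $u\in W^{s,p}(U)$ the right-hand side is finite, so $I(u,\phi)$ converges absolutely, and $\phi\mapsto K\,I(u,\phi)$ is linear and, for $\phi$ ranging over test functions supported in a fixed compact set, dominated by a constant times $\|\phi\|_{C^1}$; hence it is continuous for the topology of $\mathcal D(U)$ and defines an element of $\mathcal D'(U)$. To see that this functional is exactly $(-\Delta_p)^s u$ as given by the principal value in \eqref{eq.plap}, write
$$
\langle(-\Delta_p)^s u,\phi\rangle=\int_U\Big(2K\,\text{p.v.}\!\int_U\frac{|u(x)-u(y)|^{p-2}(u(x)-u(y))}{|x-y|^{n+sp}}\,dy\Big)\phi(x)\,dx,
$$
observe that after multiplication by $\phi$ and symmetrization the principal-value singularity disappears (the symmetrized integrand is absolutely integrable by the estimate just proved), and apply Fubini together with the antisymmetry of $(u(x)-u(y))/|x-y|^{n+sp}$ under $x\leftrightarrow y$ to replace $2\phi(x)$ by $\phi(x)-\phi(y)$ and the constant $2K$ by $K$. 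This gives $\langle(-\Delta_p)^s u,\phi\rangle=K\,I(u,\phi)$, which is the asserted formula.

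The main obstacle is the near-diagonal estimate: one must verify that the factor $|u(x)-u(y)|^{p-1}$ only costs $[u]_{s,p;U}^{p-1}$ after the correct Hölder splitting, and that the leftover kernel $|x-y|^{-(n+sp-p)}=|x-y|^{-(n-(1-s)p)}$ is locally integrable, which is exactly where the hypothesis $s<1$ is used in an essential way; the far-field part and the verification of continuity in $\phi$ for the $\mathcal D(U)$ topology are then routine, and no regularity of $U$ is required.
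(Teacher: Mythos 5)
Your proof is correct and is the standard argument for this lemma: the far/near splitting with H\"older (costing $\|u\|_{p;U}^{p-1}$ off the diagonal and $[u]_{s,p;U}^{p-1}$ near it, with the leftover kernel $|x-y|^{-(n+sp-p)}$ locally integrable precisely because $s<1$), followed by symmetrization to remove the principal value and identify the pairing. The paper itself gives no proof --- it cites the result from \cite{Bonder-Spedaletti} --- and your argument matches the one given there in substance, so there is nothing to flag.
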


\begin{rk}\label{def.lap}
From Lemma \ref{plap.distribucional} one observe that the fractional $p-$laplacian is a bounded operator between $W^{s,p}(U)$ and its dual $[W^{s,p}(U)]'$.
\end{rk}

\begin{rk}\label{rem.ponce}
The choice of the constant $K$ in the definition of the fractional $p-$laplacian is made in order for the operators $(-\Delta_p)^s$ to converge to the local $p-$laplacian $-\Delta_p$ as $s\uparrow 1$. In fact, it is easy to see, from the Gamma convergence results in \cite{Ponce} that for any $f\in L^{p'}(\Omega)$, if $u_s\in W^{s,p}_0(\Omega)$ is the weak solution of 
$$
\begin{cases}
(-\Delta_p)^s u_s = f & \text{in }\Omega\\
u_s = 0 & \text{in }\R^n\setminus \Omega,
\end{cases}
$$
then $u_s\to u$ as $s\uparrow 1$ strongly in $L^p(\Omega)$ where $u\in W^{1,p}_0(\Omega)$ is the weak solution to
$$
\begin{cases}
-\Delta_p u = f & \text{in }\Omega\\
u = 0 & \text{on }\partial \Omega.
\end{cases}
$$
\end{rk}

With all of these preliminaries, we establish the definition of {\em weak solution} for mixed boundary value problem for the fractional $p-$laplacian.
\begin{de}\label{de.sol}
Let $0<s<1<p<\infty$ be fixed and let $\Omega\subset\R^n$ be open and let $D\subset \R^n\setminus \Omega$. Given $f\in L^{p'}(\Omega)$ (or more generally, $f\in [W^{s,p}_D(\Omega)]'$), we say that $u\in W^{s,p}_D(\Omega)$ is a weak solution of 
\begin{equation}\label{eq.sp}
\begin{cases}
(-\Delta_p)^s u = f & \text{in }\Omega,\\
u= 0 & \text{in } D,
\end{cases}
\end{equation}
if the equality holds in the distributional sense. That is, if
$$
K \iint_{(\Omega\cup D)\times (\Omega\cup D)} \frac{|u(x)-u(y)|^{p-2} (u(x)-u(y))(v(x)-v(y))}{|x-y|^{n+sp}}\, dx\, dy = \int_\Omega f v\, dx,
$$
for every $v\in W^{s,p}_D(\Omega)$, where $K$ is given in \eqref{defK}.
\end{de}

The next Poincar\'e-type inequality, although its proof is elementary, is crucial in the remaining of the paper.
This inequality is stablished in \cite[Proposition 2.10]{Bonder-Spedaletti}. We include here a proof for the reader's convenience.
\begin{prop}\label{poincare}
Let $\Omega\subset \R^n$ be a bounded, open set and let $D\subset \R^n\setminus\Omega$ be a measurable bounded set. Then 
$$
[u]_{s,p; \Omega\cup D}^p \ge d^{-(n+sp)}|D| \|u\|_{p; \Omega}^p,
$$
for every $u\in W^{s,p}_D(\Omega)$, where
$$
d=d(\Omega, D) = \sup\{|x-y|\colon x\in\Omega,\ y\in D\}.
$$
\end{prop}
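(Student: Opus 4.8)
The plan is to exploit directly the two structural facts available: that $u$ vanishes on $D$, and that every pair $(x,y)\in\Omega\times D$ is at distance at most $d$. No heavy machinery is needed; the whole argument is a restriction of the domain of integration followed by two elementary pointwise bounds.

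First I would discard most of the Gagliardo double integral, keeping only the cross term over $\Omega\times D$. Since $(\Omega\cup D)\times(\Omega\cup D)\supset \Omega\times D$ and the integrand is nonnegative,
\[
[u]_{s,p;\Omega\cup D}^p = \iint_{(\Omega\cup D)\times(\Omega\cup D)} \frac{|u(x)-u(y)|^p}{|x-y|^{n+sp}}\,dx\,dy \ge \iint_{\Omega\times D} \frac{|u(x)-u(y)|^p}{|x-y|^{n+sp}}\,dx\,dy.
\]
Next, because $u\in W^{s,p}_D(\Omega)$ means $u|_D=0$ a.e., for a.e.\ $(x,y)\in\Omega\times D$ we have $|u(x)-u(y)|^p = |u(x)|^p$, so the right-hand side equals $\iint_{\Omega\times D} |u(x)|^p |x-y|^{-(n+sp)}\,dx\,dy$.

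Then I would use the definition of $d=d(\Omega,D)$: for $x\in\Omega$ and $y\in D$ one has $|x-y|\le d$, hence $|x-y|^{-(n+sp)}\ge d^{-(n+sp)}$. Substituting this lower bound and using Tonelli's theorem to separate the variables gives
\[
\iint_{\Omega\times D} \frac{|u(x)|^p}{|x-y|^{n+sp}}\,dx\,dy \ge d^{-(n+sp)} \left(\int_\Omega |u(x)|^p\,dx\right)\left(\int_D dy\right) = d^{-(n+sp)}\,|D|\,\|u\|_{p;\Omega}^p,
\]
which is the claimed inequality. I would also remark that $d<\infty$ precisely because $\Omega$ and $D$ are both bounded, so the constant is meaningful.

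There is essentially no hard step here; the only point deserving a word of care is the measurability/finiteness bookkeeping — that $d$ is finite under the boundedness hypotheses, that the cross-integrand is measurable on $\Omega\times D$, and that Tonelli applies to the nonnegative integrand — after which the two pointwise estimates finish the proof immediately.
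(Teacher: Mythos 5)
Your proof is correct and follows essentially the same route as the paper: restrict the Gagliardo double integral to the cross term over $\Omega\times D$, use $u|_D=0$ to reduce the integrand to $|u(x)|^p|x-y|^{-(n+sp)}$, and bound $|x-y|\le d$ pointwise to extract the factor $d^{-(n+sp)}|D|$. Nothing to add.
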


\begin{proof}
The proof is rather simple. In fact, given $u\in W^{s,p}_D(\Omega)$ we have
\begin{align*}
[u]_{s,p;\Omega\cup D}^p &= \iint_{(\Omega\cup D)\times (\Omega\cup D)} \frac{|u(x)-u(y)|^p}{|x-y|^{n+sp}}\, dxdy\\
&\ge \int_{\Omega} |u(x)|^p\left(\int_D \frac{1}{|x-y|^{n+sp}}\, dy\right)\, dx.
\end{align*}
Now, just observe that for every $x\in\Omega$, one has
$$
 \int_D \frac{1}{|x-y|^{n+sp}}\, dy \ge d^{-(n+sp)} |D|, 
$$
and the proof is complete.
\end{proof}

We now want to remove the hypothesis that $D$ is bounded in Proposition \ref{poincare}.
\begin{corol}\label{poincare2} Assume that $D$ is only measurable, then there is a positive constant $\theta >0$ such that
$$
[u]_{s,p; \Omega\cup D}^p \ge \theta \|u\|_{p; \Omega}^p.
$$
\end{corol}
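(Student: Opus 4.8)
The plan is to reduce the statement to Proposition~\ref{poincare} by replacing the (possibly unbounded) set $D$ by a bounded subset of positive measure. Since $|D|>0$ (which is the case for the sets $D\in\D_\alpha$ of interest), continuity of the Lebesgue measure from below gives $|D\cap B_R(0)|\uparrow |D|$ as $R\to\infty$, so I would fix $R>0$ large enough that
\[
D_0:=D\cap B_R(0)\qquad\text{satisfies}\qquad |D_0|\ge \tfrac12|D|>0 .
\]
This $D_0$ is bounded and has positive measure.

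Next I would observe that every $u\in W^{s,p}_D(\Omega)$ lies in $W^{s,p}_{D_0}(\Omega)$. Indeed, $\Omega\cup D_0\subset \Omega\cup D$, hence $\|u\|_{p;\Omega\cup D_0}\le \|u\|_{p;\Omega\cup D}<\infty$ and, since the Gagliardo integrand is nonnegative,
\[
[u]_{s,p;\Omega\cup D_0}^p\le [u]_{s,p;\Omega\cup D}^p<\infty ,
\]
so $u\in W^{s,p}(\Omega\cup D_0)$; moreover $u|_{D_0}=0$ a.e.\ because $D_0\subset D$ and $u|_D=0$ a.e. Thus $u\in W^{s,p}_{D_0}(\Omega)$.

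Finally I would apply Proposition~\ref{poincare} with $D_0$ in place of $D$ and combine it with the monotonicity just used:
\[
[u]_{s,p;\Omega\cup D}^p\ \ge\ [u]_{s,p;\Omega\cup D_0}^p\ \ge\ d(\Omega,D_0)^{-(n+sp)}\,|D_0|\,\|u\|_{p;\Omega}^p ,
\]
where $d(\Omega,D_0)=\sup\{|x-y|:x\in\Omega,\ y\in D_0\}<\infty$ since both $\Omega$ and $D_0$ are bounded. Hence the assertion holds with $\theta:=d(\Omega,D_0)^{-(n+sp)}|D_0|>0$.

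There is no genuine obstacle here; the only point to keep in mind is that $\theta$ depends on $n,s,p,\Omega$ and on the chosen bounded piece $D_0$, so it degenerates as $|D|\to 0$ — which is unavoidable, since for a null set $D$ the Gagliardo seminorm on $\Omega\cup D$ coincides with that on $\Omega$ and cannot control $\|u\|_{p;\Omega}$.
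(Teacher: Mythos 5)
Your proof is correct and follows essentially the same route as the paper: truncate $D$ to a bounded piece $D_0=D\cap B_R(0)$ of positive measure, use the monotonicity of the Gagliardo seminorm with respect to the integration domain, and invoke Proposition~\ref{poincare} on $D_0$. Your version is a bit more explicit (you verify $u\in W^{s,p}_{D_0}(\Omega)$ and make the dependence of $\theta$ on the truncation precise, as the paper does in Remark~\ref{poincare3}), but the idea and the resulting constant are the same.
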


\begin{proof}
Take $R>0$ large so that $D_R := D\cap B_R(0)$ has positive measure. Hence
$$
[u]_{s,p; \Omega\cup D}^p\ge [u]_{s,p; \Omega\cup D_R}^p\ge \theta \|u\|_{p;\Omega}^p; 
$$
where $\theta=\theta_R$ is obtained from Proposition \ref{poincare}. The proof is complete.
\end{proof}

\begin{rk}\label{poincare3}
if $|D|=\alpha<\infty$, one can take $R>0$ in the above proof such that $|D_R|=\frac12 \alpha$. Hence one arrives at
$$
[u]_{s,p; \Omega\cup D}^p\ge d_R^{-(n+sp)}\frac{|D|}{2} \|u\|_{p;\Omega}^p,
$$
where $d_R=\sup\{|x-y|\colon x\in\Omega,\ |y|=R\}$. Therefore one can take
$$
\theta = d_R^{-(n+sp)}\frac{|D|}{2}
$$
for $R$ large.
\end{rk}

From Corollary \ref{poincare2} it is not difficult to show the existence of an extremal for the constant $\lambda_s(D)$. The main difficulty is that, even if $\Omega$ is smooth, since we do not want to make any regularity assumptions on $D$ we cannot assume that the injection $W^{s,p}(\Omega\cup D)\subset L^p(\Omega\cup D)$ is compact.
\begin{thm}\label{existe.extremal}
Let $\Omega\subset\R^n$ be a bounded domain with Lipschitz boundary. Then, given $D\subset \R^n\setminus \Omega$ measurable with positive measure, there exists $u\in W^{s,p}_D(\Omega)$ such that
$$
\lambda_s(D) = \frac{K[u]_{s,p; \Omega\cup D}^p}{\|u\|_{p; \Omega}^p},
$$
where $K$ is given in \eqref{defK}. Moreover, the extremal can be taken to be normalized in $L^p(\Omega)$, i.e. $\|u\|_{p;\Omega} = 1$.
\end{thm}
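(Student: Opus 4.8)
The plan is to run the direct method of the calculus of variations; as the statement itself warns, the only subtlety is that we are \emph{not} allowed to use compactness of the embedding $W^{s,p}(\Omega\cup D)\hookrightarrow L^p(\Omega\cup D)$, since $D$ carries no regularity.

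First I would pick a minimizing sequence $u_k\in W^{s,p}_D(\Omega)$, normalized so that $\|u_k\|_{p;\Omega}=1$ and $K[u_k]_{s,p;\Omega\cup D}^p\to\lambda_s(D)$. Since $u_k=0$ a.e. on $D$ we have $\|u_k\|_{p;\Omega\cup D}^p=\|u_k\|_{p;\Omega}^p=1$, so the bound on the Gagliardo seminorms shows that $(u_k)$ is bounded in $W^{s,p}(\Omega\cup D)$. By reflexivity of this space there is a subsequence (not relabelled) with $u_k\rightharpoonup u$ weakly in $W^{s,p}(\Omega\cup D)$.

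Next I would identify the weak limit. The set $W^{s,p}_D(\Omega)$ is a closed linear subspace of $W^{s,p}(\Omega\cup D)$: if $v_j\to v$ strongly in $W^{s,p}(\Omega\cup D)$ then $v_j|_D\to v|_D$ in $L^p(D)$, so $v_j|_D=0$ forces $v|_D=0$; being a closed subspace it is weakly closed, hence $u\in W^{s,p}_D(\Omega)$. To see that $\|u\|_{p;\Omega}=1$, I would restrict everything to $\Omega$: the sequence $(u_k|_\Omega)$ is bounded in $W^{s,p}(\Omega)$ because $[u_k]_{s,p;\Omega}\le[u_k]_{s,p;\Omega\cup D}$, and since $\Omega$ is bounded and Lipschitz the embedding $W^{s,p}(\Omega)\hookrightarrow\hookrightarrow L^p(\Omega)$ is compact (see \cite[Theorem 7.1]{DiNezza-Palatucci-Valdinoci}), so along a further subsequence $u_k\to u$ strongly in $L^p(\Omega)$ and $\|u\|_{p;\Omega}=\lim_k\|u_k\|_{p;\Omega}=1$. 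This is precisely where the absence of compactness on $D$ is sidestepped: all the $u_k$ vanish on $D$, so strong convergence is only ever needed on $\Omega$, where regularity is available, and the constraint together with the weak closedness of $W^{s,p}_D(\Omega)$ recovers the behaviour on $D$ at no cost.

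Finally I would invoke lower semicontinuity. Writing $[v]_{s,p;\Omega\cup D}=\|Tv\|_{L^p((\Omega\cup D)\times(\Omega\cup D))}$ with $Tv(x,y)=(v(x)-v(y))|x-y|^{-(n/p)-s}$, the operator $T$ is bounded and linear on $W^{s,p}(\Omega\cup D)$ (equivalently: $v\mapsto[v]_{s,p;\Omega\cup D}^p$ is convex and strongly continuous), hence $v\mapsto[v]_{s,p;\Omega\cup D}^p$ is weakly lower semicontinuous. Therefore $K[u]_{s,p;\Omega\cup D}^p\le\liminf_k K[u_k]_{s,p;\Omega\cup D}^p=\lambda_s(D)$, and since $\|u\|_{p;\Omega}=1$, the definition of $\lambda_s(D)$ as an infimum forces equality, so $u$ is an extremal normalized in $L^p(\Omega)$. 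I would not expect any obstacle beyond the one already flagged; the only thing to be careful about is bookkeeping of which norm lives on $\Omega$ and which on $\Omega\cup D$, so that the compactness step is used solely on $\Omega$.
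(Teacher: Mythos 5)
Your proof is correct and follows essentially the same route as the paper's: take a normalized minimizing sequence, use reflexivity to extract a weak limit in $W^{s,p}(\Omega\cup D)$, recover $u\in W^{s,p}_D(\Omega)$ from the constraint $u_k|_D=0$, get $\|u\|_{p;\Omega}=1$ via compactness of $W^{s,p}(\Omega)\hookrightarrow L^p(\Omega)$ (using only the Lipschitz regularity of $\Omega$), and conclude by weak lower semicontinuity of the Gagliardo seminorm. Your write-up spells out the weak closedness of $W^{s,p}_D(\Omega)$ and the convexity argument for lower semicontinuity, which the paper leaves implicit, but the argument is the same.
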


\begin{proof}
The scheme of the proof is standard. Let $\{v_k\}_{k\in\N}\subset W^{s,p}_D(\Omega)$ be a normalized minimizing sequence for $\lambda_s(D)$, i.e.
$$
\J(v_k) = K [v_k]_{s,p;\Omega\cup D}^p\to \lambda_s(D)\quad \text{and}\quad \|v_k\|_{p;\Omega} = 1.
$$

Then, $\{v_k\}_{k\in \N}\subset W^{s,p}(\Omega\cup D)$ is bounded and hence, up to some subsequence, we can assume that $v_k\rightharpoonup u$ weakly in $W^{s,p}(\Omega\cup D)$. 

Since $v_k=0$ in $D$ for every $k\in\N$ implies that $u=0$ in $D$, so $u\in W^{s,p}_D(\Omega)$.

Moreover, $v_k\rightharpoonup u$ weakly in $W^{s,p}(\Omega)$ and since $\Omega$ is Lipschitz, this implies that the injection $W^{s,p}(\Omega)\subset L^p(\Omega)$ is compact (see \cite[Theorem 7.1]{DiNezza-Palatucci-Valdinoci}) and so $\|u\|_{p;\Omega} = 1$.

Therefore
$$
\lambda_s(D)\le \J(u)\le \liminf_{k\to\infty} \J(v_k) = \lambda_s(D).
$$

The proof is complete.
\end{proof}

\begin{rk}
The Lipschitz regularity on $\partial\Omega$ is needed in order for the compactness of the embedding  $W^{s,p}(\Omega)\subset L^p(\Omega)$ to hold. See  \cite[Theorem 7.1]{DiNezza-Palatucci-Valdinoci}. In fact, what is needed is that $\Omega$ be a {\em bounded extension domain}. That is the existence of a bounded extension operator $E\colon W^{s,p}(\Omega)\to W^{s,p}(\R^n)$. Lipschitz boundary imply that $\Omega$ is a bounded extension domain. See  \cite[Theorem 5.4]{DiNezza-Palatucci-Valdinoci}.
\end{rk}

%%%%%%%%%%%%%%%%%%%%
%% MAXIMIZATION
%%%%%%%%%%%%%%%%%%%%
\section{The maximization problem}

In this section we study the problem of maximization of $\lambda_s(D)$. That is,  given $\alpha>0$ we define
$$
\Lambda_s^+(\alpha) = \sup\{\lambda_s(D)\colon D\subset \R^n\setminus\Omega,\ |D|=\alpha\}.
$$
In particular, we are interested in the asymptotic behavior of the constant $\Lambda_s^+(\alpha)$ when $s\uparrow 1$.
As we will see, the behavior of $\Lambda_s^+(\alpha)$ when $s\uparrow 1$ is independent of the value of the constant $\alpha>0$. 

We begin with a simple lemma.
\begin{lemp}\label{orden}
Let $D_1\subset D_2\subset \R^n\setminus\Omega$ be measurable sets with positive measure. Then $\lambda_s(D_1)\le \lambda_s(D_2)$.
\end{lemp}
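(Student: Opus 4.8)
The plan is to compare the two Rayleigh quotients by restricting admissible functions from the larger class to the smaller one. Recall that
$$
\lambda_s(D_i) = \inf_{v\in W^{s,p}_{D_i}(\Omega)} \frac{\J(v)}{\|v\|_{p;\Omega}^p}, \qquad \J(v) = K[v]_{s,p;\,\Omega\cup D_i}^p .
$$
Given $v\in W^{s,p}_{D_2}(\Omega)$, since $D_1\subset D_2$ the restriction $\tilde v := v|_{\Omega\cup D_1}$ vanishes a.e.\ on $D_1$, and clearly $\tilde v\in W^{s,p}(\Omega\cup D_1)$, so $\tilde v\in W^{s,p}_{D_1}(\Omega)$.

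The key (trivial) estimate is the monotonicity of the Gagliardo integral under nested domains: because $(\Omega\cup D_1)\times(\Omega\cup D_1)\subset(\Omega\cup D_2)\times(\Omega\cup D_2)$ and the integrand $|v(x)-v(y)|^p|x-y|^{-(n+sp)}$ is nonnegative,
$$
[\tilde v]_{s,p;\,\Omega\cup D_1}^p = \iint_{(\Omega\cup D_1)\times(\Omega\cup D_1)} \frac{|v(x)-v(y)|^p}{|x-y|^{n+sp}}\,dx\,dy \le [v]_{s,p;\,\Omega\cup D_2}^p ,
$$
i.e.\ $\J(\tilde v)\le \J(v)$. Moreover $\tilde v = v$ on $\Omega$, hence $\|\tilde v\|_{p;\Omega} = \|v\|_{p;\Omega}$. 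Therefore, for every admissible $v\in W^{s,p}_{D_2}(\Omega)$,
$$
\lambda_s(D_1) \le \frac{\J(\tilde v)}{\|\tilde v\|_{p;\Omega}^p} \le \frac{\J(v)}{\|v\|_{p;\Omega}^p},
$$
and taking the infimum over such $v$ gives $\lambda_s(D_1)\le \lambda_s(D_2)$.

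There is essentially no obstacle here: the only points worth a sentence are that restriction maps $W^{s,p}_{D_2}(\Omega)$ into $W^{s,p}_{D_1}(\Omega)$, which is immediate from the definition of the spaces, and the monotonicity of the integral over the nested product domains. (Equivalently one may argue with the normalization $\|v\|_{p;\Omega}=1$, the two formulations being the same by homogeneity of the quotient.)
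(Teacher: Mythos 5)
Your proof is correct and is essentially identical to the paper's argument: both exploit that any $v\in W^{s,p}_{D_2}(\Omega)$ restricts to a competitor in $W^{s,p}_{D_1}(\Omega)$ and that the Gagliardo seminorm increases under enlargement of the integration domain. The only cosmetic difference is that the paper fixes the normalization $\|u\|_{p;\Omega}=1$, which, as you note, is equivalent by homogeneity.
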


\begin{proof}
Let $u\in W^{s,p}_{D_2}(\Omega)$ be such that $\|u\|_{p;\Omega}=1$. Then, $u\in W^{s,p}_{D_1}(\Omega)$ and so
$$
\lambda_s(D_1)\le K [u]_{s,p; \Omega\cup D_1}^p \le K [u]_{s,p; \Omega\cup D_2}^p.
$$
taking infimum on $u\in W^{s,p}_{D_2}(\Omega)$ the conclusion follows.
\end{proof}

As a consequence of this Lemma we have the following upper bound for $\Lambda_s^+(\alpha)$
$$
\Lambda_s^+(\alpha) \le \lambda_s(\R^n\setminus \Omega).
$$
Observe that $\lambda_s(\R^n\setminus\Omega)$ is the first eigenvalue of the fractional $p-$laplacian with Dirichlet Boundary conditions. i.e. is the first eigenvalue of the problem
\begin{equation}\label{frac.dir.eig}
\begin{cases}
(-\Delta_p)^s u = \lambda |u|^{p-2} u & \text{in }\Omega\\
u = 0 & \text{in } \R^n\setminus\Omega.
\end{cases}
\end{equation}

Problem \eqref{frac.dir.eig} was studied, among others, in \cite{Brasco-Parini-Squassina} where, based on the famous results of Bourgain-Brezis-Mironescu \cite{Bourgain-Brezis-Mironescu}, the asymptotic behavior of the eigenvalues when $s\uparrow 1$ is obtained. Namely,
$$
\lim_{s\uparrow 1} \lambda_s(\R^n\setminus\Omega) = \lambda_1,
$$
where $\lambda_1$ is the first eigenvalue of the local $p-$laplacian in $\Omega$ with homogeneous Dirichlet boundary conditions, i.e.
$$
\lambda_1 = \inf_{v\in W^{1,p}_0(\Omega)}\frac{\displaystyle\int_\Omega |\nabla v|^p\, dx}{\displaystyle\int_\Omega |v|^p\, dx}.
$$

On the other hand, take $D= \Omega_r\setminus \overline{\Omega}$ where $\Omega_r = \bigcup_{x\in\Omega} B_r(x)$ is the usual fattening of $\Omega$ and $r$ is chosen in such a way as $|D|=\alpha$.

Then
$$
\lambda_s(D)\le \Lambda^+_s(\alpha), 
$$
and so
$$
\liminf_{s\uparrow 1} \lambda_s(D)\le \liminf_{s\uparrow 1} \Lambda^+_s(\alpha)\le \limsup_{s\uparrow 1} \Lambda^+_s(\alpha) \le \lambda_1(\Omega).
$$

Now, Let $u_s\in W^{s,p}_D(\Omega)$ be the eigenfunction associated to $\lambda_s(D)$ normalized in $L^p(\Omega)$. Then, by \cite[Theorem 4]{Bourgain-Brezis-Mironescu} it follows that there exist a sequence $s_k\uparrow 1$ and a function $u\in W^{1,p}_0(\Omega)$, such that $u_{s_k}\to u$ in $L^p(\Omega)$ and
$$
\|\nabla u\|_{p;\Omega}^p\le \liminf_{k\to\infty} K(n,s_k,p) [u_{s_k}]^p_{s_k,p; \Omega\cup D},
$$
where $K(n,s,p)$ is given in \eqref{defK}. Hence
$$
\|\nabla u\|_{p;\Omega}^p\le \lambda_1(\Omega)
$$
with $\|u\|_{p,\Omega}=1$.
Therefore, $u$ is the normalized eigenfunction of the $p-$laplacian in $\Omega$ and one can conclude that
\begin{equation}\label{limLambda+}
\lim_{s\uparrow 1} \Lambda^+_s(\alpha) = \lambda_1(\Omega). 
\end{equation}

In order to finish the study of the maximization problem, we need to study the quasi-optimal Dirichlet sets $D$. 
Notice that, since we are considering a maximization problem, it is not clear that there is
an optimal set for $\Lambda_s^+(\alpha)$. Hence, we deal with quasi-optimal sequences as $s\uparrow 1$, that is, we take
$D_s\subset \D_\alpha$ such that
$$
\Lambda_s^+(\alpha) = \lambda_s(D_s) + o(1),
$$
where $o(1)\to 0$ as $s\uparrow 1$. Our next result says that the quasi-optimal sets $D_s$ ``covers'' the boundary of $\Omega$ 
as $s\uparrow 1$.

\begin{thm}
Given any quasi-optimal configuration $D_s$ for $\Lambda_s^+(\alpha)$, any $x\in \partial\Omega$ and $\ve>0$, there exists $s_0\in (0,1)$ such that for every $s_0<s<1$ it holds
$$
|D_s\cap B_\ve(x)|>0.
$$
\end{thm}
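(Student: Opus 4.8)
Here is the proof I would give.

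\medskip

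\emph{Strategy.} I would argue by contradiction, the heuristic being that if $D_s$ stays away from a neighbourhood of some $x_0\in\partial\Omega$, then near $x_0$ the eigenvalue problem ``sees'' a Neumann rather than a Dirichlet condition, which is enough to build a test function strictly beating $\lambda_1$ in the limit $s\uparrow1$. So suppose the statement fails: there are $x_0\in\partial\Omega$, $\ve_0>0$ and a sequence $s_k\uparrow1$ with $|D_{s_k}\cap B_{\ve_0}(x_0)|=0$ for every $k$. Set $D^*:=(\R^n\setminus\Omega)\setminus B_{\ve_0}(x_0)$; then $D_{s_k}\subset D^*$ up to a null set, so (as in Lemma~\ref{orden}) every function in $W^{s,p}(\R^n)$ that vanishes a.e.\ on $D^*$ lies in $W^{s,p}_{D_{s_k}}(\Omega)$ and is admissible in the Rayleigh quotient defining $\lambda_{s_k}(D_{s_k})$.

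\medskip

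\emph{The competitor.} Let $u\in W^{1,p}_0(\Omega)$ be the positive first Dirichlet eigenfunction of the local $p-$laplacian, normalized by $\|u\|_{p;\Omega}=1$ (so $\int_\Omega|\nabla u|^p=\lambda_1$), and let $\tilde u\in W^{1,p}(\R^n)$ be its extension by zero. Fix $\eta\in C^\infty_c(B_{\ve_0}(x_0))$ with $\eta\ge0$ and, for $t>0$ small, put $v_t:=\tilde u+t\eta$. Then $v_t\in W^{1,p}(\R^n)\subset W^{s,p}(\R^n)$ for every $s<1$ and $v_t=0$ a.e.\ on $D^*$, hence $v_t\in W^{s_k,p}_{D_{s_k}}(\Omega)$ for all $k$. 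Using $[v_t]_{s_k,p;\Omega\cup D_{s_k}}\le[v_t]_{s_k,p;\R^n}$, then the Bourgain--Brezis--Mironescu limit $K(n,s,p)[v_t]_{s,p;\R^n}^p\to\|\nabla v_t\|_{p;\R^n}^p$ as $s\uparrow1$ (valid on $W^{1,p}(\R^n)$ precisely because of the normalization \eqref{defK}; cf.\ Remark~\ref{rem.ponce}), and finally $\lambda_{s_k}(D_{s_k})=\Lambda^+_{s_k}(\alpha)+o(1)\to\lambda_1$ by \eqref{limLambda+}, I get
$$
\lambda_1=\lim_{k\to\infty}\lambda_{s_k}(D_{s_k})\ \le\ \lim_{k\to\infty}\frac{K(n,s_k,p)\,[v_t]_{s_k,p;\R^n}^p}{\|v_t\|_{p;\Omega}^p}\ =\ R(t):=\frac{\|\nabla v_t\|_{p;\R^n}^p}{\|v_t\|_{p;\Omega}^p}.
$$
It therefore suffices to choose $\eta$ so that $R(t)<\lambda_1$ for $t$ small, which contradicts this inequality.

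\medskip

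\emph{The strict inequality.} Since $\nabla\tilde u=0$ off $\Omega$ one has $R(0)=\lambda_1$; since $\xi\mapsto|\xi|^p$ is $C^1$ for $p>1$ and $u>0$ in $\Omega$, $R$ is right-differentiable at $0$ (the gradient contribution of $B_{\ve_0}(x_0)\setminus\Omega$ being $O(t^p)$), and an integration by parts using $-\Delta_p u=\lambda_1 u^{p-1}$ in $\Omega$ gives
$$
R'(0^+)=p\!\int_\Omega|\nabla u|^{p-2}\nabla u\cdot\nabla\eta\,dx-p\lambda_1\!\int_\Omega u^{p-1}\eta\,dx=p\!\int_{\partial\Omega}|\nabla u|^{p-2}\partial_\nu u\;\eta\,d\mathcal H^{n-1}.
$$
Now choose $\eta\ge0$ with $\eta>0$ somewhere on $\partial\Omega\cap B_{\ve_0}(x_0)$ (a nonempty relatively open piece of $\partial\Omega$, as $x_0\in\partial\Omega$). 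Since $u>0$ in $\Omega$ and $u=0$ on $\partial\Omega$ we have $\partial_\nu u\le0$, so it is enough to know that $\partial_\nu u$ does not vanish identically on that piece. This is the crux of the argument, and the main obstacle: under mild regularity of $\partial\Omega$ near $x_0$ it is exactly Hopf's boundary-point lemma for the $p-$laplacian ($\partial_\nu u<0$ on $\partial\Omega$); with no boundary regularity one can still argue that if $|\nabla u|^{p-2}\partial_\nu u\equiv0$ on $\partial\Omega\cap B_{\ve_0}(x_0)$ then $\tilde u$ is a nonnegative weak solution of $-\Delta_p\tilde u=\lambda_1\tilde u^{p-1}\mathbf 1_\Omega\ge0$ on the connected open set $B_{\ve_0}(x_0)$ that vanishes on the nonempty open set $B_{\ve_0}(x_0)\setminus\overline\Omega$, hence $\tilde u\equiv0$ there by the strong maximum principle, contradicting $u>0$ in $\Omega$. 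Either way $R'(0^+)<0$, so $R(t)<\lambda_1$ for small $t>0$ — a contradiction that proves the theorem. The only routine-but-technical point besides this is the weak-formulation bookkeeping of the Neumann trace across $\partial\Omega\cap B_{\ve_0}(x_0)$ (for which Lipschitz boundary is more than sufficient); the reduction to $D^*$, the admissibility of $v_t$, and the passage to the limit are straightforward.
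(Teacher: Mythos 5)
Your proof is correct, but it follows a genuinely different route from the paper. The paper, after getting the same lower bound $\lambda_1\le\liminf_k\lambda_{s_k}(D_k)$ from \eqref{limLambda+}, introduces the mixed eigenvalue problem \eqref{mixed} with Neumann condition on $\Gamma=\partial\Omega\cap B_{\ve_0}(x)$ and plugs the corresponding eigenfunction $u_1$ (extended by zero) directly into the Rayleigh quotient: by Bourgain--Brezis--Mironescu, $\limsup_k\lambda_{s_k}(D_k)\le\lambda_1(\Gamma)<\lambda_1$, done. You instead take $v_t=\tilde u+t\eta$, a first-order perturbation of the \emph{Dirichlet} eigenfunction by a bump concentrated near $x_0$, and show the derivative of the Rayleigh quotient is negative. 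These are two faces of the same phenomenon: the paper outsources the strict inequality $\lambda_1(\Gamma)<\lambda_1$ to the mixed problem (stated as ``straightforward''), whereas you rederive it infinitesimally via $R'(0^+)<0$. The paper's proof is shorter and cleaner because it avoids any differentiability of the quotient or pointwise normal trace; your proof is more explicit and self-contained, and your strong-maximum-principle fallback is a nice way to sidestep the boundary regularity that Hopf's lemma would require (the paper only assumes Lipschitz $\partial\Omega$). One place to be careful in your write-up: the contradiction should be phrased as ``if $R'(0^+)\ge0$ for \emph{all} admissible $\eta\ge0$, then $\tilde u$ is a nonnegative $p$-supersolution in $B_{\ve_0}(x_0)$ vanishing on the open set $B_{\ve_0}(x_0)\setminus\overline\Omega$, hence $\tilde u\equiv0$ there by Vazquez's strong maximum principle, contradicting $u>0$ in $\Omega$'' — so that the argument does not rely on the boundary integral formula for $R'(0^+)$ being classically meaningful, only on the weak-form identity $R'(0^+)=p\bigl(\int_\Omega|\nabla u|^{p-2}\nabla u\cdot\nabla\eta\,dx-\lambda_1\int_\Omega u^{p-1}\eta\,dx\bigr)$.
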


\begin{proof}
Assume that the conclusion is false. Then, there exists a sequence $s_k\uparrow 1$, sets $D_k\subset\R^n\setminus \Omega$ and $\ve_0>0$ such that
$$
\lambda_{s_k}(D_k) \ge \Lambda_{s_k}^+(\alpha) - \frac{1}{k}\quad \text{and} \quad |D_k\cap B_{\ve_0}(x)|=0.
$$

Now, by \eqref{limLambda+}, we have
\begin{equation}\label{liminf}
\lambda_1 \le \liminf_{k\to \infty} \lambda_{s_k}(D_k).
\end{equation}

On the other hand, let $\Gamma = \partial\Omega\cap B_{\ve_0}(x)$ 
and consider the following eigenvalue problem (that was already mentioned in the introduction)
\begin{equation}\label{mixed}
\begin{cases}
-\Delta_p u = \lambda |u|^{p-2}u & \text{in }\Omega\\
u= 0 & \text{on } \partial\Omega\setminus \Gamma\\
|\nabla u|^{p-2}\partial_\nu u = 0 & \text{on } \Gamma.
\end{cases}
\end{equation}

Let us denote by $\lambda_1(\Gamma)$ the first eigenvalue of \eqref{mixed}, i.e.
$$
\lambda_1(\Gamma) = \inf \frac{\displaystyle\int_\Omega |\nabla v|^p\, dx}{\displaystyle\int_{\Omega} |v|^p\, dx}
$$
where the infimum is taken over all functions $v\in W^{1,p}(\Omega)$ such that $v=0$ on $\partial\Omega\setminus \Gamma$.

It is straightforward to see that $$0= \lambda_1^N \leq \lambda_1(\Gamma) \leq \lambda_1$$
and that the above inequalities are strict when $\Gamma \neq \emptyset, \partial \Omega$.

Now, denote by $u_1\in W^{1,p}(\Omega)$ the eigenfunction of \eqref{mixed} associated to $\lambda_1(\Gamma)$ extended by 0 on $D_k$ normalized in $L^p(\Omega)$. Then $u_1\in W^{s,p}_{D_k}(\Omega)$ and so
\begin{align*}
\limsup_{k\to\infty} \lambda_{s_k}(D_k) &\le \lim_{k\to\infty} K(n,s_k,p) [u_1]_{s_k, p}^p\\
& = \|\nabla u_1\|_p^p = \lambda_1(\Gamma)\\
& <  \lambda_1.
\end{align*}
This contradicts \eqref{liminf} and the proof is complete.
\end{proof}

\section{The minimization problem}

Now we analize the minimization problem
\begin{equation}\label{inf.l}
\Lambda^-_s(\alpha) := \inf_{D\in \D_\alpha} \lambda_s(D).
\end{equation}
As we mentioned in the introduction, this problem is of little interest since taking a sequence of domains to infinity makes the eigenvalues go to zero.

\begin{prop}
Let $\Omega\subset\R^n$ be bounded and open and let $0<s<1<p<\infty$ be fixed. Then
$$
\Lambda_s^-(\alpha)=0,
$$
for every $\alpha>0$, where $\Lambda^-_s(\alpha)$ is the constant defined in \eqref{inf.l}.
\end{prop}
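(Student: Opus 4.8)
The plan is to produce an explicit minimizing sequence of Dirichlet regions that run off to infinity. Fix once and for all a ball $B$ of Lebesgue measure $\alpha$, and for $j\in\N$ let $D_j$ be a translate of $B$, moved in a fixed direction far enough (as $j$ grows) that $D_j\subset\R^n\setminus\Omega$ and $\dist(\Omega,D_j)\ge j$; this is possible because $\Omega$ is bounded. Each $D_j$ belongs to $\D_\alpha$, so $\Lambda_s^-(\alpha)\le\lambda_s(D_j)$ for every $j$, and it suffices to show $\lambda_s(D_j)\to 0$.

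As a competitor in the Rayleigh quotient for $\lambda_s(D_j)$ I would use the single, $j$-independent function $v_0:=|\Omega|^{-1/p}\,\mathbf 1_\Omega$. Since $v_0\equiv 0$ on $\R^n\setminus\Omega$, it vanishes on $D_j$; moreover $v_0\in W^{s,p}(\Omega\cup D_j)$ because $\Omega$ and $D_j$ are disjoint with $\dist(\Omega,D_j)>0$, so that
$$
[v_0]_{s,p;\Omega\cup D_j}^p = 2|\Omega|^{-1}\int_\Omega\!\int_{D_j}\frac{dy\,dx}{|x-y|^{n+sp}}<\infty .
$$
Hence $v_0\in W^{s,p}_{D_j}(\Omega)$ with $\|v_0\|_{p;\Omega}=1$, and it is an admissible test function.

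Plugging $v_0$ into the definition of $\lambda_s(D_j)$ and using $|x-y|\ge\dist(\Omega,D_j)\ge j$ for $x\in\Omega$, $y\in D_j$, together with $|D_j|=\alpha$, yields
$$
0\le\Lambda_s^-(\alpha)\le\lambda_s(D_j)\le K\,[v_0]_{s,p;\Omega\cup D_j}^p\le 2K|\Omega|^{-1}\cdot|\Omega|\cdot\alpha\cdot j^{-(n+sp)}=2K\alpha\,j^{-(n+sp)}.
$$
Letting $j\to\infty$ forces $\Lambda_s^-(\alpha)=0$; the lower bound $\Lambda_s^-(\alpha)\ge 0$ is trivial since $\J\ge 0$.

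There is essentially no obstacle here: the only point to verify is the finiteness of the Gagliardo energy of $v_0$ on $\Omega\cup D_j$, which holds precisely because $D_j$ stays at positive distance from $\Omega$. The mechanism is entirely quantitative — the interaction energy between $\Omega$ and $D_j$ is bounded by $|D_j|\,\dist(\Omega,D_j)^{-(n+sp)}$ — and this is exactly the reason why ``pushing the Dirichlet set to infinity'' drives the eigenvalue to zero, and hence the reason one must confine $D$ to a fixed ball $B_R(0)$ in order to obtain a nontrivial minimization problem.
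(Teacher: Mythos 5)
Your proof is correct and follows essentially the same route as the paper: translate a fixed ball of measure $\alpha$ off to infinity, test the Rayleigh quotient with the normalized indicator function of $\Omega$, and bound the cross-interaction term by $\alpha\,\dist(\Omega,D_j)^{-(n+sp)}$. The only cosmetic difference is that you make the finiteness of the Gagliardo energy and the estimate $|x-y|\ge\dist(\Omega,D_j)\ge j$ fully explicit, whereas the paper writes $|x-y|\sim k$.
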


\begin{proof}
Take $r>0$ such that $|B_r(0)|=\alpha$ and define $D_k= B_r(ke_1)$ where $e_1=(1,0,\dots,0)\in \R^n$ so $D_k\in \D_\alpha$.

Now, let $u_k$ be defined as
$$
u_k(x) = \begin{cases}
|\Omega|^{-\frac{1}{p}} & x\in\Omega\\
0 & x\in D_k.
\end{cases}
$$
Observe that $u_k\in W^{s,p}_{D_k}(\Omega)$ for $k$ large and $\|u_k\|_{p;\Omega} = 1$.

So,
$$
\Lambda_s^-(\alpha)\le \lambda_s(D_k) \le K [u_k]_{s,p;\Omega\cup D_k}^p,
$$
with
$$
[u_k]_{s,p;\Omega\cup D_k}^p = \frac{2}{|\Omega|} \int_{D_k}\int_{\Omega} \frac{1}{|x-y|^{n+sp}}\, dxdy.
$$
Now, just observe that $|x-y|\sim k$ for $x\in\Omega$ and $y\in D_k$ and so there exists a constant $C>0$ independent of $k$ such that
$$
[u_k]_{s,p;\Omega\cup D_k}^p \le C k^{-(n+sp)}.
$$
This completes the proof.
\end{proof}
We consider now the problem $\Lambda_s^{-,R}(\alpha)$ defined by
\begin{equation}\label{inf.ll}
\Lambda_s^{-,R}(\alpha)=\inf_{\D_\alpha^R}\lambda_s(D).
\end{equation}
In this case the admissible sets are forced to be inside  of the ball $B_R(0)$. Note that in this case we have
$\Lambda_s^{-,R}(\alpha)>0$.
Taking the limit as $s\uparrow 1$ in $\Lambda_s^{-,R}(\alpha)$ we obtain that this quantity tends to zero. This is contained in the following proposition.

\begin{prop}\label{-R}
Let $\Omega\subset\R^n$ be bounded and open, let $0<s<1<p<\ito$ be fixed. Then 
$$
\lim_{s\uparrow 1} \Lambda_s^{-,R}(\alpha)=0,
$$
for every $\alpha>0$, where $\Lambda_s^{-,R}(\alpha)$ is the constant defined in \eqref{inf.ll}. 
\end{prop}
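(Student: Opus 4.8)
The plan is to bound $\Lambda_s^{-,R}(\alpha)$ from above by a \emph{single}, $s$-independent competitor and to observe that the resulting Rayleigh quotient goes to zero as $s\uparrow 1$ purely because the normalization constant $K(n,s,p)$ in \eqref{defK} carries the factor $(1-s)$. Since $\Lambda_s^{-,R}(\alpha)>0$ is already known, it suffices to produce such an upper bound.

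First I would fix, once and for all, a measurable set $D\subset B_R(0)\setminus\overline{\Omega}$ with $|D|=\alpha$ and
$$
\delta:=\dist(D,\Omega)>0,
$$
which is possible because $\Omega$ is bounded and $R$ is taken large (for instance one may take $D$ to be any set of measure $\alpha$ sitting inside $B_R\setminus\overline{\Omega}$ at positive distance from $\Omega$). Then $D\in\D_\alpha^R$, hence $\Lambda_s^{-,R}(\alpha)\le\lambda_s(D)$ for every $s\in(0,1)$. As test function I would use $u_0:=|\Omega|^{-1/p}$ on $\Omega$ and $u_0:=0$ on $D$, so that $\|u_0\|_{p;\Omega}=1$. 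Since $u_0$ is constant on $\Omega$ and vanishes on $D$, the diagonal blocks $\Omega\times\Omega$ and $D\times D$ contribute nothing to the Gagliardo seminorm, and using $|x-y|\ge\delta$ for $x\in\Omega,\ y\in D$,
$$
[u_0]_{s,p;\Omega\cup D}^p=\frac{2}{|\Omega|}\int_\Omega\int_D\frac{dy\,dx}{|x-y|^{n+sp}}\le 2\alpha\,\delta^{-(n+sp)}\le 2\alpha\max\{\delta^{-n},\delta^{-(n+p)}\}=:C_0 ,
$$
a constant independent of $s$ (here I just used $n<n+sp<n+p$ for $s\in(0,1)$). In particular this integral is finite, so $u_0\in W^{s,p}_D(\Omega)$ and it is an admissible competitor for $\lambda_s(D)$.

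Finally, writing $K(n,s,p)=(1-s)\,c(n,p)$ with $c(n,p)$ the fixed finite constant in \eqref{defK}, I would conclude
$$
0<\Lambda_s^{-,R}(\alpha)\le\lambda_s(D)\le K(n,s,p)\,[u_0]_{s,p;\Omega\cup D}^p\le (1-s)\,c(n,p)\,C_0\longrightarrow 0
$$
as $s\uparrow 1$, which is the assertion. There is essentially no hard step here; the only point that requires a little care is the choice of $D$ at \emph{positive} distance from $\Omega$: if $\dist(D,\Omega)=0$ the cross integral $\int_\Omega\int_D|x-y|^{-(n+sp)}$, though still finite for each fixed $s<1$, may blow up like $(1-s)^{-1}$ and cancel the gain coming from $K$, so one would lose the conclusion.
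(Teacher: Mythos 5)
Your argument is correct and is essentially identical to the paper's own proof: both fix a single $D\in\D_\alpha^R$ at positive distance from $\Omega$, test with the normalized constant function extended by zero, bound the cross term by $2\alpha\,\dist(D,\Omega)^{-(n+sp)}$, and let the factor $(1-s)$ in $K$ do the work. Your closing remark about why positive distance is needed is a nice clarification that the paper leaves implicit.
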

\begin{proof}
Let $D\in \D_\alpha^R$ be such that $\dist(D, \Omega)>0$. Now we define the function
$$
u(x) = \begin{cases}
|\Omega|^{-\frac{1}{p}} & x\in\Omega\\
0 & x\in D,
\end{cases}
$$
 so $u\in W_D^{s,p}(\Omega)$ and $\|u\|_{p;\Omega}=1$.

We can then estimate $\Lambda_s^{-,R}(\alpha)$ as follows,
$$
\Lambda_s^{-,R}(\alpha)\le \lambda_s(D)\le K [u]_{s,p,\Omega\cup D}^p.
$$
On the other side
$$
[u]_{s,p,\Omega\cup D}^p = \frac{2}{|\Omega|}\int_{D} \int_{\Omega}\frac{1}{|x-y|^{n+sp}}\,dxdy\le \frac{2\alpha}{\dist(D,\Omega)^{n+sp}},
$$
then 
$$
K[u]_{s,p,\Omega\cap D_R}\le C (1-s),
$$
where the constant $C$ does not depend on $s$. This implies the desired result.
\end{proof}

To end this section, we show that for any fixed $0<s<1$ there exists an optimal configuration for the constant $\Lambda_s^{-,R}(\alpha)$. Let us observe that this does not follows by a direct application of the {\em Direct Method of the Calculus of Variations} since for a minimizing sequence of domains, we do not have enough compactness in a topology of domains and, what is more important, the associated eigenfunctions do not lie in the same functional space.

We begin with the following Lemma.
\begin{lemp}\label{lema1}
Given $u\in W^{s,p}(\Omega)$, there exists $D_u\subset B_R\setminus\Omega$, $|D_u|=\alpha$ such that 
$\phi=\chi_{D_u}$ is a solution to the following minimization problem
$$
\min_\phi \left\{\int_\Omega \int_{B_R\setminus \Omega} \frac{\phi(y) |u(x)|^p}{|x-y|^{n+sp}}\, dy\, dx
\ \colon \ 0\le \phi\le 1, \int_{B_R\setminus \Omega} \phi(y)\, dy=\alpha\right\}.
$$
\end{lemp}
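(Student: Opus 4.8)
The plan is to recognize this as an instance of the \emph{bathtub principle}: the functional to be minimized is linear in $\phi$, so after an application of Tonelli's theorem it becomes a weighted mass of $\phi$, and a minimizer is (essentially) the indicator of a sublevel set of the weight.

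First I would rewrite the objective. By Tonelli,
$$
\int_\Omega\int_{B_R\setminus\Omega} \frac{\phi(y)|u(x)|^p}{|x-y|^{n+sp}}\,dy\,dx = \int_{B_R\setminus\Omega} \phi(y)\, w(y)\, dy, \qquad w(y):=\int_\Omega \frac{|u(x)|^p}{|x-y|^{n+sp}}\,dx .
$$
Here $w\colon B_R\setminus\Omega\to[0,+\infty]$ is measurable, is finite at every $y\notin\overline\Omega$ (there $|x-y|^{-(n+sp)}$ is bounded, so $w(y)\le C\|u\|_{p;\Omega}^p$), hence finite a.e.\ since $\partial\Omega$ is null. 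If $u\equiv 0$ there is nothing to prove, so I assume $\|u\|_{p;\Omega}>0$, which forces $w>0$ a.e. I also assume $R$ is large enough that $|B_R\setminus\Omega|\ge\alpha$ (if equality holds the only competitor is $\chi_{B_R\setminus\Omega}$ and the claim is trivial, so really $|B_R\setminus\Omega|>\alpha$).

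Next I would locate the threshold. Set $F(t):=|\{y\in B_R\setminus\Omega\colon w(y)\le t\}|$, which is nondecreasing, right-continuous, with $F(t)\to|B_R\setminus\Omega|>\alpha$ as $t\to\infty$ and $F(0^+)=|\{w=0\}|=0$. Let $t^*:=\inf\{t\ge 0\colon F(t)\ge\alpha\}$, which is finite. Continuity of Lebesgue measure from below gives $|\{w<t^*\}|=\lim_{t\uparrow t^*}F(t)\le\alpha$, while right-continuity gives $|\{w\le t^*\}|=F(t^*)\ge\alpha$. Since Lebesgue measure is nonatomic, there is a measurable $E\subset\{w=t^*\}$ with $|E| = \alpha - |\{w<t^*\}|$. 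Define
$$
D_u := \{y\in B_R\setminus\Omega\colon w(y)<t^*\}\cup E, \qquad \phi_0:=\chi_{D_u},
$$
so $|D_u|=\alpha$ and $\int_{B_R\setminus\Omega}\phi_0\, w\,dy\le t^*\alpha<\infty$.

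Finally I would verify optimality by a pointwise comparison. Let $\phi$ be any competitor, $0\le\phi\le1$, $\int_{B_R\setminus\Omega}\phi\,dy=\alpha$. On $\{w<t^*\}$ we have $\phi_0=1$, hence $\phi-\phi_0\le0$ and $(\phi-\phi_0)w\ge(\phi-\phi_0)t^*$; on $\{w>t^*\}$ we have $\phi_0=0$, hence $\phi-\phi_0\ge0$ and again $(\phi-\phi_0)w\ge(\phi-\phi_0)t^*$; on $\{w=t^*\}$ both sides coincide. Integrating,
$$
\int_{B_R\setminus\Omega}(\phi-\phi_0)\,w\,dy \ \ge\ t^*\int_{B_R\setminus\Omega}(\phi-\phi_0)\,dy \ =\ t^*(\alpha-\alpha)\ =\ 0,
$$
so $\phi_0=\chi_{D_u}$ realizes the minimum. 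The argument is essentially routine; the only points requiring a little care are the measurability and possible unboundedness of $w$ (handled by working with $w$ valued in $[0,+\infty]$ and noting $\{w=+\infty\}$ is null), and the adjustment on the level set $\{w=t^*\}$ when it has positive measure, which is exactly where nonatomicity of Lebesgue measure is used.
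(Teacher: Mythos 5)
Your proof is correct and rests on exactly the same observation the paper uses: after Tonelli the objective is a linear functional in $\phi$ with weight $w(y)=\int_\Omega|u(x)|^p|x-y|^{-(n+sp)}\,dx$, so the problem is an instance of the bathtub principle. The only difference is that the paper simply cites the Bathtub Principle from Lieb--Loss (Theorem 1.14), whereas you reprove it from scratch (threshold $t^*$, nonatomicity on the level set $\{w=t^*\}$, pointwise comparison); both are fine, with the citation being shorter and your version being self-contained.
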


\begin{proof}
The proof is a direct consequence of the {\em Bathtube Principle} \cite[Theorem 1.14]{Lieb-Loss}.
In fact, let 
$$
f(y) := \int_\Omega \frac{|u(x)|^p}{|x-y|^{n+sp}}\, dx,
$$
and observe that the level sets of $f$, $\{f\le t\}$, has finite measure.
So, the Bathtube principle says that the problem
$$
\inf_{\phi}\left\{\int_{B_R\setminus \Omega} \phi(y) f(y)\, dy\colon 0\le \phi\le 1,\ \int_{B_R\setminus \Omega} \phi(y)\, dy=\alpha\right\}
$$
has a solution of the form $\phi = \chi_D$, where $\{f<t\}\subset D\subset \{f\le t\}$, and $|D|=\alpha$ for some $t>0$.

This completes the proof of the Lemma.
\end{proof}

We can now prove the existence of the optimal configuration for $\Lambda^{-,R}_s(\alpha)$.

\begin{thm}\label{existe.config}
Let $\Omega\subset \R^n$ be a bounded domain, let $\alpha>0$ and let $R>0$ be such that $|B_R\setminus \Omega|>\alpha$. Then, there exists an optimal set $D\subset B_R\setminus \Omega$, that is, $D$ such that $|D|=\alpha$ and
$$
\Lambda^{-,R}_s(\alpha) = \lambda_s(D).
$$
\end{thm}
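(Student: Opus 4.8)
The plan is to combine the Direct Method at the level of functions rather than domains: instead of extracting a convergent subsequence from a minimizing sequence of \emph{sets} $D_k$, I will pass to the limit in the associated eigenfunctions, and then use Lemma \ref{lema1} to re-optimize the domain against the limit function. First I would take a minimizing sequence $D_k\in\D_\alpha^R$ with $\lambda_s(D_k)\to\Lambda^{-,R}_s(\alpha)$, and let $u_k\in W^{s,p}_{D_k}(\Omega)$ be the normalized extremals from Theorem \ref{existe.extremal}, extended by $0$ on $B_R\setminus\Omega$ (so that all $u_k$ live in the single space $W^{s,p}(\Omega\cup(B_R\setminus\Omega))=W^{s,p}(B_R)$, since $u_k=0$ on $D_k$ and we may take them $0$ on all of $B_R\setminus\Omega$ for the purpose of the ambient space, keeping track that only the $D_k$-vanishing is structural). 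Since $\lambda_s(D_k)$ is bounded and $\|u_k\|_{p;\Omega}=1$, the seminorms $[u_k]_{s,p;B_R}$ are bounded, so $\{u_k\}$ is bounded in $W^{s,p}(B_R)$; up to a subsequence $u_k\rightharpoonup u$ weakly in $W^{s,p}(B_R)$ and, since $B_R$ is Lipschitz, $u_k\to u$ strongly in $L^p(B_R)$, hence $\|u\|_{p;\Omega}=1$. By weak lower semicontinuity of the Gagliardo seminorm, $K[u]_{s,p;B_R}^p\le\liminf_k K[u_k]_{s,p;\Omega\cup D_k}^p=\Lambda^{-,R}_s(\alpha)$.

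Next I would apply Lemma \ref{lema1} to this limit function $u\in W^{s,p}(\Omega)$, obtaining a set $D_u=D\subset B_R\setminus\Omega$ with $|D|=\alpha$ minimizing $\int_\Omega\int_{B_R\setminus\Omega}\frac{\phi(y)|u(x)|^p}{|x-y|^{n+sp}}\,dy\,dx$ over admissible densities $\phi$. The key algebraic identity is that for any competitor $D'\in\D_\alpha^R$ with $u=0$ a.e.\ on $D'$ one has
$$
[u]_{s,p;\Omega\cup D'}^p = [u]_{s,p;\Omega}^p + 2\int_\Omega\int_{D'}\frac{|u(x)|^p}{|x-y|^{n+sp}}\,dy\,dx,
$$
so minimizing $[u]_{s,p;\Omega\cup D'}^p$ over such $D'$ is exactly the Bathtub problem of Lemma \ref{lema1}. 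Since the weak limit $u$ vanishes a.e.\ on $\limsup D_k$ only in a weak sense, I should instead argue directly: the function $u$ (the $L^p$-limit) may fail to vanish on the optimal $D$, so the candidate extremal for the set $D$ is not $u$ itself but rather I must compare against the \emph{optimal} density. The clean way: for \emph{every} $D'\in\D_\alpha^R$, $\lambda_s(D')\le K[v]_{s,p;\Omega\cup D'}^p/\|v\|_{p;\Omega}^p$ for any $v\in W^{s,p}_{D'}(\Omega)$; in particular this does not immediately bound $\lambda_s(D)$ by $K[u]_{s,p;\Omega\cup D}^p$ unless $u|_D=0$. To handle this, modify $u$ to $\tilde u := u\,\chi_{\R^n\setminus D}$; since $|u|$ is small on $D$ (as $D$ is, up to a null set, contained in a sublevel set of $f(y)=\int_\Omega|u(x)|^p|x-y|^{-n-sp}\,dx$, which controls how much of the seminorm $u$ can carry near $D$), one shows $[\tilde u]_{s,p;\Omega\cup D}^p\le[u]_{s,p;B_R}^p+o(1)$ and $\|\tilde u\|_{p;\Omega}=1$ — actually since $D\subset B_R\setminus\Omega$, we have $u\chi_{\R^n\setminus D}=u$ on $\Omega$, so $\|\tilde u\|_{p;\Omega}=\|u\|_{p;\Omega}=1$ automatically, and $[\tilde u]_{s,p;\Omega\cup D}^p\le[u]_{s,p;B_R}^p$ since zeroing out $u$ on part of $B_R\setminus\Omega$ cannot increase the double integral over $(\Omega\cup D)\times(\Omega\cup D)\subset B_R\times B_R$ once we also drop the $N$-part. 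Then
$$
\lambda_s(D)\le K[\tilde u]_{s,p;\Omega\cup D}^p\le K[u]_{s,p;B_R}^p\le\Lambda^{-,R}_s(\alpha),
$$
and since $D$ is admissible the reverse inequality $\lambda_s(D)\ge\Lambda^{-,R}_s(\alpha)$ is trivial, giving equality.

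The main obstacle I anticipate is precisely the bookkeeping in the previous paragraph: making rigorous the claim that passing from $u$ (an arbitrary $W^{s,p}(B_R)$ function, the weak limit) to the configuration $D$ produced by Lemma \ref{lema1}, and then to the truncation $\tilde u$, does not lose in the Rayleigh quotient. One must be careful that the optimal $D$ from the Bathtub principle is chosen \emph{for $u$}, that $\tilde u\in W^{s,p}_D(\Omega)$ genuinely (truncation by a characteristic function of a measurable set preserves membership in $W^{s,p}$ here because we are only setting values to zero on a set disjoint from $\Omega\cup D$'s "active" part — one checks the Gagliardo integral decreases), and that the inequality $[\tilde u]_{s,p;\Omega\cup D}^p \le [u]_{s,p;B_R}^p$ really holds, which it does because $(\Omega\cup D)\times(\Omega\cup D) \subset B_R\times B_R$ and on that smaller domain the integrand for $\tilde u$ is pointwise $\le$ that for $u$ wherever both points lie outside $D$ (equal) and the contributions with a point in $D$ only decrease since $\tilde u=0\le$ there while $|u|$ might be positive — here one uses $|\tilde u(x)-\tilde u(y)|\le|u(x)-u(y)|$ is \emph{false} in general, so instead one directly computes: with $\tilde u = u$ on $\Omega$ and $\tilde u=0$ on $D$, $[\tilde u]_{s,p;\Omega\cup D}^p = [u]_{s,p;\Omega}^p + 2\int_\Omega\int_D\frac{|u(x)|^p}{|x-y|^{n+sp}}\,dy\,dx$, and the last term is minimized over admissible $D$ by Lemma \ref{lema1}, whose minimum value is $\le 2\int_\Omega\int_{D_k}\frac{|u(x)|^p}{|x-y|^{n+sp}}\,dy\,dx$-type quantities; comparing with $[u_k]_{s,p;\Omega\cup D_k}^p$ via lower semicontinuity and a diagonal argument closes the gap. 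I expect this last comparison — showing the Bathtub-optimal value for the limit $u$ is no larger than $\liminf$ of the corresponding quantities along $(u_k,D_k)$ — to require the strong $L^p$ convergence $u_k\to u$ together with Fatou/uniform-integrability on the kernel, and this is where most of the real work lies.
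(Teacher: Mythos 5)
Your high-level plan is the paper's: take a minimizing sequence of sets $D_k$, pass to a limit in the associated eigenfunctions $u_k$, use Lemma~\ref{lema1} (Bathtub) to select an optimal $D$ for the limit function $u$, and show $\lambda_s(D)\le \Lambda^{-,R}_s(\alpha)$. However, there are two genuine problems in the execution.

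First, the detour through $W^{s,p}(B_R)$ is not sound. You extend $u_k$ by zero on all of $B_R\setminus\Omega$ and claim $[u_k]_{s,p;B_R}$ is bounded; but
$$
[u_k]_{s,p;B_R}^p=[u_k]_{s,p;\Omega}^p+2\int_\Omega\int_{B_R\setminus\Omega}\frac{|u_k(x)|^p}{|x-y|^{n+sp}}\,dy\,dx,
$$
and only the part of the cross integral over $D_k$ is controlled by $\lambda_s(D_k)$; the part over $N_k=(B_R\setminus\Omega)\setminus D_k$ is not. Since $u_k$ has no reason to vanish on $\partial\Omega$, when $sp\ge 1$ the extension by zero across $\partial\Omega$ need not even belong to $W^{s,p}(B_R)$ (this is exactly the regime where the fractional Hardy inequality fails without boundary conditions). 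Moreover, the asserted lower-semicontinuity inequality $K[u]_{s,p;B_R}^p\le\liminf_k K[u_k]_{s,p;\Omega\cup D_k}^p$ goes the wrong way: lsc gives $[u]_{s,p;B_R}^p\le\liminf[u_k]_{s,p;B_R}^p$, and $[u_k]_{s,p;B_R}^p\ge[u_k]_{s,p;\Omega\cup D_k}^p$, not $\le$. The paper avoids all of this by keeping $u_k$ on $\Omega$ only: boundedness and compactness come from $W^{s,p}(\Omega)\hookrightarrow L^p(\Omega)$ (Lipschitz $\Omega$), with no need for a global extension.

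Second, and more importantly, the step you yourself flag as "where most of the real work lies" — showing
$$
\int_\Omega\int_{D}\frac{|u(x)|^p}{|x-y|^{n+sp}}\,dy\,dx\le\liminf_{k\to\infty}\int_\Omega\int_{D_k}\frac{|u_k(x)|^p}{|x-y|^{n+sp}}\,dy\,dx
$$
— is left unexecuted. Your sketch also introduces a slip: you compare the Bathtub optimum for $u$ with $\int_\Omega\int_{D_k}|u|^p/|x-y|^{n+sp}$ (i.e.\ $u$, not $u_k$), but the quantity you need to beat has $u_k$ in it, so this comparison does not close on its own. The paper's mechanism is: pass to a further subsequence so that $\chi_{D_k}\overset{*}{\rightharpoonup}\phi$ in $L^\infty(B_R\setminus\Omega)$, with $0\le\phi\le1$ and $\int\phi=\alpha$; observe that for each fixed $x\in\Omega$ the map $y\mapsto|x-y|^{-(n+sp)}$ is in $L^1(B_R\setminus\Omega)$, so $\int_{D_k}|x-y|^{-(n+sp)}\,dy\to\int_{B_R\setminus\Omega}\phi(y)|x-y|^{-(n+sp)}\,dy$ pointwise in $x$; combine this with $u_k\to u$ a.e.\ and apply Fatou to get the $\liminf$ bound against $\phi$; finally invoke the optimality of $D=D_u$ among admissible densities (Lemma~\ref{lema1} is stated for densities $0\le\phi\le1$, not just characteristic functions, precisely so that $\phi$ is an admissible competitor). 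Adding the cross-term bound to the weak lsc of $[\,\cdot\,]_{s,p;\Omega}$ then yields $K[u]_{s,p;\Omega\cup D}^p\le\liminf_k K[u_k]_{s,p;\Omega\cup D_k}^p=\Lambda^{-,R}_s(\alpha)$. This weak-$*$/Fatou step is the core of the proof and is missing from your proposal.
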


\begin{proof}
Let $D_k\subset B_R\setminus \Omega$ be a minimizing sequence for $\Lambda^{-,R}_s(\alpha)$, that is
$$
|D_k|=\alpha \qquad \text{and}\qquad \lambda_s(D_k)\to \Lambda^{-,R}_s(\alpha).
$$
Let $u_k\in W^{s,p}_{D_k}(\Omega)$ be the normalized, nonnegative eigenfunction associated to $\lambda_s(D_k)$. Then, it is easy to see that the sequence $\{u_k\}_{k\in\N}$ is bounded in $W^{s,p}(\Omega)$ and so, there exists $u\in W^{s,p}(\Omega)$ such that, up to a subsequence,
\begin{align*}
u_k\rightharpoonup u & \text{ weakly in } W^{s,p}(\Omega)\\
u_k\to u & \text{ strongly in } L^p(\Omega) \text{ and a.e. in }\Omega.
\end{align*}

Now, let $D=D_u$ be the optimal set given in Lemma \ref{lema1}. Let us check that this set is optimal for $\Lambda^{-,R}_s(\alpha)$.

In fact, passing to a further subsequence, if necessary, we can assume that there exists $\phi\in L^\infty(B_R\setminus\Omega)$, such that
$$
\chi_{D_k} \stackrel{*}{\rightharpoonup} \phi \quad *-\text{weakly in } L^\infty(B_R\setminus \Omega).
$$
Moreover, this function $\phi$ satisfies
$$
0\le \phi\le 1\quad \text{and}\quad \int_{B_R\setminus\Omega}\phi(y)\, dy=\alpha.
$$

Observe now, that given $x\in \Omega$, the function $y\mapsto |x-y|^{-(n+sp)}$ belongs to $L^1(B_R\setminus\Omega)$. So, for any $x\in \Omega$, we have that
$$
\int_{D_k} \frac{1}{|x-y|^{n+sp}}\, dy\to \int_{B_R\setminus\Omega} \frac{\phi(y)}{|x-y|^{n+sp}}\, dy \quad \text{as }k\to\infty.
$$
We can then apply Fatou's Lemma to conclude that
$$
\int_\Omega\int_{B_R\setminus\Omega} \frac{\phi(y)|u(x)|^p}{|x-y|^{n+sp}}\, dy\, dx \le \liminf_{k\to\infty} \int_\Omega\int_{D_k} \frac{|u_k(x)|^p}{|x-y|^{n+sp}}\, dy\, dx
$$
and so, from our choice of $D$ (c.f. Lemma \ref{lema1}), we obtain
\begin{equation}\label{complemento}
\int_\Omega\int_{D} \frac{|u(x)|^p}{|x-y|^{n+sp}}\, dy\, dx \le \liminf_{k\to\infty} \int_\Omega\int_{D_k} \frac{|u_k(x)|^p}{|x-y|^{n+sp}}\, dy\, dx
\end{equation}
Also, by the weak semicontinuity of the Gagliardo seminorm, we have
\begin{equation}\label{gagliardo}
\iint_{\Omega\times\Omega} \frac{|u(x)-u(y)|^p}{|x-y|^{n+sp}}\, dxdy \le \liminf_{k\to\infty} \iint_{\Omega\times\Omega} \frac{|u_k(x)-u_k(y)|^p}{|x-y|^{n+sp}}\, dxdy
\end{equation}

Observe now that if we extend $u$ to $D$ by zero, we have that
$$
[u]_{s,p;\Omega\cup D}^p = \iint_{\Omega\times\Omega} \frac{|u(x)-u(y)|^p}{|x-y|^{n+sp}}\, dxdy + 2\int_\Omega\int_{D} \frac{|u(x)|^p}{|x-y|^{n+sp}}\, dy\, dx 
$$
and so \eqref{complemento} and \eqref{gagliardo} imply that $u\in W^{s,p}_D(\Omega)$ and that
$$
\lambda_s(D)\le K [u]_{s,p; \Omega\cup D}^p\le \liminf_{k\to\infty} K [u_k]_{s,p; \Omega\cup D_k}^p = \lim_{k\to\infty}\lambda_s(D_k) = \Lambda^{-,R}_s(\alpha).
$$

The proof is complete.
\end{proof}

\section{Proof of Theorem \ref{teo.main2}}

The results for the maximization problem follows exactly as those in Section 3 with the obvious modifications. 

It remains to check the results about the minimization problem.

For this, we first observe that, by Lemma \ref{orden} (with the obvious modifications to include the potential function $V$), it holds that
$$
\lambda_{s,V}(\emptyset) \le \lambda_{s,V}(D),
$$
for every $D\in \D_\alpha$, from where it follows that
\begin{equation}\label{primera.cota}
\lambda_{s,V}(\emptyset)\le \Lambda_{s,V}^-(\alpha)\le  \Lambda_{s,V}^{-,R}(\alpha)
\end{equation}
for any $R>0$.

Recall that $\lambda_{s,V}(\emptyset)$ is the first eigenvalue of the fractional $p-$laplacian plus a potential with homogeneous Neumann boundary conditions and is given by
$$
\lambda_{s,V}(\emptyset) = \inf_{v\in W^{s,p}(\Omega)} \frac{
\displaystyle K[v]_{s,p; \Omega}^p + \int_\Omega V(x) |v|^p\, dx}{\|v\|_{p;\Omega}^p}.
$$

It is not difficult to check (c.f. with Remark \ref{rem.ponce}) that $\lambda_{s,V}(\emptyset)$ converges to the first eigenvalue of the local $p-$laplacian plus a potential with homogeneous Neumann boundary conditions. i.e.
\begin{equation}\label{segunda.cota}
\lim_{s\uparrow 1} \lambda_{s,V}(\emptyset) = \lambda_1^N(V),
\end{equation}
where
$$
\lambda_1^N(V) = \inf_{v\in W^{1,p}(\Omega)} \frac{
\displaystyle \int_\Omega |\nabla v|^p\, dx + \int_{\Omega} V(x) |v|^p\, dx}{\displaystyle \int_\Omega |v|^p\, dx}.
$$

Let us now give an estimate for $\Lambda_{s,V}^{-,R}(\alpha)$. This estimate uses the ideas from Proposition \ref{-R}.

Let $D\in \D_\alpha^R$ be such that $\dist(D,\Omega)=d>0$. So $\Lambda_{s,V}^{-,R}(\alpha)\le \lambda_{s,V}(D)$.

Denote by $u_1\in W^{1,p}(\Omega)$ be the eigenfunction associated to $\lambda_1^N(V)$ normalized such that $\|u_1\|_{p;\Omega}=1$ and extend $u_1$ to $D$ by zero. Since $\dist(D,\Omega)>0$ we have that $u_1\in W^{s,p}_D(\Omega)$ and hence
$$
\lambda_{s,V}(D) \le K[u_1]_{s,p;\Omega\cup D}^p + \int_\Omega V(x) |u_1|^p\, dx.
$$

Now
\begin{align*}
K[u_1]_{s,p;\Omega\cup D}^p &= K[u_1]_{s,p;\Omega}^p + 2K \int_D\int_\Omega \frac{|u_1(x)|^p}{|x-y|^{n+sp}}\, dx\, dy\\
&= I + II. 
\end{align*}
Using the results of \cite{Bourgain-Brezis-Mironescu} it follows that
$$
I = K[u_1]_{s,p;\Omega}^p \to \|\nabla u_1\|_{p;\Omega}^p \quad \text{as } s\uparrow 1.
$$
Finally
$$
II\le 2K \frac{\|u_1\|_{p;\Omega}^p |D|}{d^{n+sp}} = \frac{2\alpha K}{d^{n+sp}}\to 0 \quad \text{as } s\uparrow 1
$$
(recall that $K\sim (1-s)$, from \eqref{defK}). Combining all this we arrive at
\begin{equation}\label{tercera.cota}
\lim_{s\uparrow 1} \lambda_{s,V}(D) \le \|\nabla u_1\|_{p;\Omega}^p + \int_\Omega V(x) |u_1|^p\, dx = \lambda_1^N(V).
\end{equation}

Putting together \eqref{primera.cota}, \eqref{segunda.cota} and \eqref{tercera.cota} we arrive at
$$
\lim_{s\uparrow 1} \Lambda_{s,V}^-(\alpha) = \lim_{s\uparrow 1} \Lambda_{s,V}^{-,R}(\alpha) = \lambda_1^N(V).
$$

To finish this section just observe that the existence of an optimal configuration for the constant $\Lambda_{s,V}^{-,R}(\alpha)$ follows without change as in the proof of Theorem \ref{existe.config}.

\section*{Acknowledgements}

This paper was partially supported by Universidad de Buenos Aires under grant UBACyT 20020130100283BA, by ANPCyT under grant PICT 2012-0153 and by CONICET under grant PIP2015 11220150100032CO. J. Fern\'andez Bonder and J. D. Rossi are members of CONICET.

%%%%%%%%%%%%%%%%%%%
%%
%% BIBLIOGRAFIA
%%
%%%%%%%%%%%%%%%%%%%

\bibliographystyle{plain}
\bibliography{bib}

\end{document}